 \let\mathscr\relax
\apptocmd{\lim}{\limits}{}{}
\theoremstyle{definition}
\newtheorem{definition}{Definition}[section]
\newtheorem{lemma}{Lemma}[section]
\newtheorem{theorem}{Theorem}[section]
\newtheorem{corollary}{Corollary}[section]
\newtheorem{proposition}{Proposition}[section]
\theoremstyle{remark}
\newtheorem{remark}{Remark}[section]
\DeclareFontFamily{U}{mathb}{\hyphenchar\font45}
\DeclareFontShape{U}{mathb}{m}{n}{ <-6> matha5 <6-7> matha6 <7-8>
mathb7 <8-9> mathb8 <9-10> mathb9 <10-12> mathb10 <12-> mathb12 }{}
\DeclareSymbolFont{mathb}{U}{mathb}{m}{n}
\DeclareMathAccent{\abxring}{0}{mathb}{"38}
\DeclareFontFamily{U}{mathb}{\hyphenchar\font45}
\DeclareFontShape{U}{mathb}{m}{n}{ <-6> matha5 <6-7> matha6 <7-8>
mathb7 <8-9> mathb8 <9-10> mathb9 <10-12> mathb10 <12-> mathb12 }{}
\DeclareSymbolFont{mathb}{U}{mathb}{m}{n}
\newcommand{\powerset}{\raisebox{.15\baselineskip}{\Large\ensuremath{\wp}
}}
\newcommand{\s}{\mathcal{S}}
\newcommand{\id}{id_{\Lambda(I)}}
\newcommand{\be}{\mathcal{B}}
\newcommand{\ka}{\mathcal{K}}
\newcommand{\f}{\mathcal{F}}
\newcommand*{\rom}[1]{\expandafter\@slowromancap\romannumeral #1@}
\newcommand{\closure}[2][3]{
  {}\mkern#1mu\overline{\mkern-#1mu#2}}
\let\oldemptyset\emptyset
\let\emptyset\varnothing
\begin{document}

\title{Applications of the Tarski-Kantorovitch \br Fixed-Point Principle to
the study of \br Infinite Iterated Function Systems}

\author{Bogdan-Alexandru Luchian}
\address{%
Faculty of Mathematics and Computer Science\\
University of Bucharest\\
Academiei Street, 14\\
010014 Bucharest\\
Romania}
\curraddr{}
\email{bogdanalexandruluchian@gmail.com}
\subjclass{Primary 06A06, 26A18, 37C25, 47H10, 54H25; Secondary 54C05, 54D55, 54E35}
\keywords{Attractor of an Infinite Iterated Function System, Canonical Projection, Continuity with respect to a partial order and topology, Contraction, Infinite Iterated Function Systems, Iterated Function Systems, Metric Spaces, Shift Space, Tarski-Kantorovitch Fixed-Point Principle}
\thanks{}

\date{}

\begin{abstract}
The aim of this paper is to establish some results regarding Infinite
Iterated Function Systems with the help of the Tarski-Kantorovitch
fixed-point principles for maps on partially ordered sets. To this end we
introduce two new classes of Infinite Iterated Function Systems which are
well suited for applying the aforementioned principle. We also study some
properties of the canonical projection from the shift space of an Infinite
Iterated Function System belonging to one of the two introduced classes to
its attractor.
\end{abstract}

\maketitle
\section{Introduction}
Let $(X, d)$ be a complete metric space and $\s = (X, (f_i)_{i \in I})$ an
Infinite Iterated Function System (IIFS for short) and define the operator
$F_\s$ on the family of nonempty closed and bounded sets of $X$ as
follows: $F_\s(B) := \closure{\bigcup_{i \in I} f_i(B)}$ for all $B
\subseteq X$ neonempty, closed and bounded. The fundamental result
regarding IIFS's states that if $sup_{i \in I} lip(f_i) < 1,$ where
$lip(f_i)$ is the Lipschitz constant associated to $f_i,$ then there
exists a unique nonempty, closed and bounded subset of $X,$ $A(\s),$ such
that $F_\s(A(\s)) = A(\s),$ which is called the attractor of the IIFS.\\
In this paper we follow in the footsteps of the article \cite{8} and study
possibilities of applying the Tarski-Kantorovitch fixed-point principle in
the theory of IIFS's, which naturally conducts us to consider two new
classes of IIFS's in certain topological spaces. The idea of applying the
Tarki-Kantorovitch principle to deduce results about the fixed points of
some classes of maps is not necessarily new - for other applications one
may consult \cite{1}, \cite{3}, \cite{7}.\\
Our paper is organized in the following way. In the second section we will
recall some definitions and results regarding IIFS's and introduce two new
classes of IIFS's.\\
In the third section of the article we shall study possibilities of
applying the Tarski-Kantorovitch fixed point principle for the partially
ordered set (poset for short) $(\powerset(X), \supseteq),$ where $X$ is an
arbitrary set. In this case we give sufficient conditions for the
existence of a (greatest) fixed point of the Hutchinson-Barnsley operator
associated to an IIFS. These conditions turn out to be also necessary if
one is interested in applying the Tarski-Kantorovitch principle.\\
In the fourth section we turn our attention to the poset $(\f(X),
\supseteq),$ where $\f(X)$ denotes the family of all nonempty closed
subsets of a Hausdorff topological space $X.$ As seen in \cite{8}, the
countable chain condition in this poset forces $X$ to be countably
compact. In the case that $X$ is also a sequential space, the main result
of this section provides sufficient conditions for the existence of a
greatest fixed point of the Hutchinson-Barnsley operator associated to an
IIFS. As in the previous section, the specified conditions are also
necessary for applying the Tarski-Kantorovitch principle.\\
In the fifth section we are employing similar techniques as in the
previous sections in order to apply the Tarski-Kantorovitch principle to
the poset $(\ka(X), \supseteq),$ where $\ka(X)$ denotes the family of
nonempty compact subsets of a topological space $X.$ In particular, we
discover that in the same way as in \cite{8}, in order to apply the
Tarski-Kantorovitch principle in this case, we can actually assume that
$X$ is compact. We also show that if $(X, d)$ is a bounded Heine-Borel
metric space, then the Hutchinson-Barnsley operator associated to an IIFS
of contractions admits a nonempty compact fixed point (though we cannot
posit that this fixed point is unique from the proof provided).\\
In the final section of the article we turn our attention to the shift
space associated to an IIFS and the canonical projection from the shift
space to the attractor of said IIFS and investigate what special
properties this projection has in the cases of the two new classes of
IIFS's introduced in this article. We also provide the reader with a sufficient condition for the canonical projection to be a homeomorphism and state a few immediate corollaries.\\
As a final remark in this introductory part of the article, we want to stress the
fact that the generality of the setting in which we work, i.e. that of Infinite
Iterated Function Systems, has forced us to impose (fairly natural) conditions on the
systems we work with in order to apply the Tarski-Kantorovitch Fixed-Point Principle
and the two examples we provide tell us the introduction of these classes is in fact
necessary.\\
For more work on Infinite Iterated Function Systems, you can also check
\cite{9}, \cite{10}, \cite{11}, \cite{12}, \cite{13}.

\section{Preliminaries}

\begin{definition}
Let $(X, d_X), (Y, d_Y)$ be two metric spaces and $(f_i)_{i \in I}
\subseteq Y^X$ a family of maps. We say that this family is bounded if the
set $\bigcup_{i \in I} f_i(A)$ is bounded for any $A \subseteq X$ bounded.
\end{definition}

\begin{definition}
Let $(X, d_X), (Y, d_Y)$ be metric spaces and $f : X \xrightarrow{} Y$ a
function. The quantity $lip(f) := \sup_{x, y \in X, x \neq y}
\frac{d_Y(f(x), f(y))}{d_X(x, y)} \in [0, \infty]$ is called the Lipschitz
constant associated to $f.$ We say that $f$ is Lipschitz if $lip(f) <
\infty$ and that $f$ is a contraction if $lip(f) < 1.$
\end{definition}

As an immediate consequence of the previous definition, we have the
following lemma.

\begin{lemma}
Let $(X, d_X), (Y, d_Y)$ be two metric spaces and $f : X \xrightarrow{} Y$
a map. Then if we denote the diameter of a subset $A \subseteq X$ with
$\delta(A),$ we have that $\delta(f(A)) \leq lip(f) \delta(A)$ for any $A
\subseteq X.$ In particular, if $x, y \in X,$ then $d_Y(f(x), f(y)) \leq
lip(f) d_X(x, y).$
\end{lemma}

\begin{remark}
For an arbitrary topological space $X,$ we shall denote by \br $\powerset(X)$
the family of subsets of $X,$ $\powerset^\ast(X) := \powerset(X) \setminus
\{\oldemptyset\},$ $\ka(X)$ the family of nonempty compact subsets of $X$
and by $\f(X)$ we mean the family of nonempty closed sets of $X.$ If $X$
is also metrizable, we shall denote by $\be(X)$ the family of nonempty
closed and bounded subsets of $X.$ In the latter case, note that we have
the inclusions $\ka(X) \subseteq \be(X) \subseteq \powerset^\ast(X).$
However, if $X$ is not necessarily metrizable, but it is Hausdorff, we
have that $\ka(X) \subseteq \f(X) \subseteq \powerset^\ast(X).$
\end{remark}

\begin{definition}
Let $X, Y$ be two arbitrary sets, $f : X \xrightarrow{} Y$ a function and
$y \in Y.$ We shall call the set $f^{-1}(y)$ the fibre of $f$ over $y.$
\end{definition}

\begin{definition}
We say that a topological space $X$ is sequential if every sequentially
closed subset $A \subseteq X$ is closed.
\end{definition}

We recall the following characterisation of continuity on countably
compact sequential spaces from \cite{8}.

\begin{theorem}
Let $X, Y$ be countably compact and sequential spaces and $f : X
\xrightarrow{} Y$ a map. The following conditions are equivalent:
\begin{enumerate}[a)]
    \item $f$ is continuous;
    \item if $A \in \f(X),$ then $f(A) \in \f(Y)$ and all fibres of $f$
    are closed;
    \item if $A \in \f(X),$ then $f(A) \in \f(Y)$ and given a decreasing
    sequence $(A_n)_{n \in \mathbb{N}} \subseteq \f(X),$ then
    $f(\bigcap_{n \in \mathbb{N}} A_n) = \bigcap_{n \in \mathbb{N}}
    f(A_n);$
    \item if $A \in \ka(X),$ then $f(A) \in \ka(Y)$ and given a decreasing
    sequence $(A_n)_{n \in \mathbb{N}} \subseteq \ka(X),$ then
    $f(\bigcap_{n \in \mathbb{N}} A_n) = \bigcap_{n \in \mathbb{N}}
    f(A_n).$
\end{enumerate}
\end{theorem}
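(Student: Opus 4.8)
The plan is to establish the chain $a) \Rightarrow b) \Rightarrow c) \Rightarrow a)$ together with $a) \Leftrightarrow d)$, which yields the full equivalence. Throughout I would exploit three facts about the ambient spaces. First, since $X$ is sequential, a map out of $X$ is continuous if and only if it is sequentially continuous, and a subset of $X$ is closed if and only if it is sequentially closed; I would therefore verify continuity by testing it against convergent sequences. Second, countable compactness is equivalent to the assertion that every decreasing sequence of nonempty closed sets has nonempty intersection, and this is the form in which I would use it for both $X$ and $Y$. Third, the range $\{z_n : n \in \mathbb{N}\} \cup \{z\}$ of a convergent sequence $z_n \to z$ is compact, hence closed once the target is Hausdorff. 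As is standard for this characterisation, I would take $X$ and $Y$ to be Hausdorff (or at least $T_1$): this is exactly what makes fibres $f^{-1}(y) = f^{-1}(\{y\})$ and such ranges closed, and what guarantees uniqueness of sequential limits. I would also invoke the lemma that a sequential countably compact space is sequentially compact, proved by noting that if a sequence with distinct terms had no convergent subsequence then all of its tails would be closed, contradicting countable compactness.

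For $a) \Rightarrow b)$, the fibres are closed because $\{y\}$ is closed in the $T_1$ space $Y$ and $f$ is continuous. For the image condition, let $A \in \f(X)$; as a closed subspace of a countably compact sequential space, $A$ is itself countably compact and sequential, hence sequentially compact. Given $y_n = f(a_n) \in f(A)$ with $y_n \to y$, a convergent subsequence $a_{n_k} \to a \in A$ together with continuity and uniqueness of limits forces $y = f(a) \in f(A)$, so $f(A)$ is sequentially closed, hence closed. The implication $a) \Rightarrow d)$ is the same argument with ``closed'' replaced by ``compact'': $f$ carries compact sets to compact sets, and for the intersection identity I would intersect the fibre $f^{-1}(y)$ with each $A_n$ and read the finite intersection property off the compactness of $A_1$.

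For $b) \Rightarrow c)$, only the intersection identity requires proof, the image condition being shared. The inclusion $f(\bigcap_n A_n) \subseteq \bigcap_n f(A_n)$ is automatic. For the reverse, fix $y \in \bigcap_n f(A_n)$ and choose $x_n \in A_n$ with $f(x_n) = y$; then the sets $C_n := f^{-1}(y) \cap A_n$ are nonempty, closed (here the closedness of the fibre is essential), and decreasing, so countable compactness of $X$ produces a point $x \in \bigcap_n C_n$, whence $x \in \bigcap_n A_n$ and $f(x) = y$, i.e. $y \in f(\bigcap_n A_n)$.

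The crux is recovering continuity, that is, $c) \Rightarrow a)$ and, identically, $d) \Rightarrow a)$. Testing sequential continuity, let $x_n \to x$ and suppose for contradiction that $f(x_n) \not\to f(x)$. Passing to a subsequence, I may assume $f(x_n) \in C := Y \setminus V$ for some open neighbourhood $V$ of $f(x)$ with $f(x) \notin C$; the case $x_n = x$ infinitely often is immediately contradictory, so I may assume $x_n \neq x$, and then uniqueness of sequential limits ensures that no point other than $x$ lies in all tails. Setting $A_m := \{x_n : n \geq m\} \cup \{x\}$ gives a decreasing sequence of closed sets with $\bigcap_m A_m = \{x\}$, so the intersection identity in $c)$ yields $\bigcap_m f(A_m) = f(\{x\}) = \{f(x)\}$. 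Now the sets $D_m := f(A_m) \cap C$ are nonempty (each contains $f(x_m)$), closed, and decreasing, so countable compactness of $Y$ supplies a point $z \in \bigcap_m D_m \subseteq \bigcap_m f(A_m) = \{f(x)\}$ that also lies in $C$; hence $f(x) = z \in C$, contradicting $f(x) \notin C$. The implication $d) \Rightarrow a)$ is word for word the same, using that the ranges $A_m$ are compact and reading the finite intersection property off the compactness of $f(A_1)$. I expect this last step to be the main obstacle: the intersection identity alone only pins $\bigcap_m f(A_m)$ down to $\{f(x)\}$, and it is the countable compactness of $Y$ that prevents the mass of the sequence $f(x_n)$ from escaping within the closed set $C$, so the delicate point is the precise interplay of these two hypotheses.
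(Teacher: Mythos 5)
The paper does not prove this theorem at all: it is recalled verbatim from \cite{8} (Jachymski, Gajek and Pokarowski), so there is no in-paper argument to compare against. Your proof is correct and is essentially the standard one for this result: all the key points check out, including the reduction of countable compactness to the finite-intersection property for decreasing closed sequences, the fact that a closed subspace of a countably compact sequential space is sequentially compact (which gives $a)\Rightarrow b)$ and $a)\Rightarrow d)$), the fibre-intersection trick for $b)\Rightarrow c)$, and the crucial step in $c)\Rightarrow a)$ where the tails $A_m=\{x_n: n\geq m\}\cup\{x\}$ are closed, have intersection $\{x\}$ (your appeal to uniqueness of limits correctly rules out a term recurring infinitely often), and the sets $f(A_m)\cap(Y\setminus V)$ violate countable compactness of $Y$. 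Note that your standing Hausdorff assumption is licensed by the paper's convention, stated at the end of Section 2, that every compact or countably compact space is Hausdorff.
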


\begin{definition}
Let $(X, d)$ be a metric space. The generalised Hausdorff-Pompeiu
semimetric on the family of subsets of $X$ induced by $d$ is defined as $h
: \powerset^\ast(X) \times \powerset^\ast(X) \xrightarrow{} [0, \infty],$
where
\begin{equation}
    h(A, B) := \max\{d(A, B), d(B, A)\}
\end{equation}
and
\begin{equation}
    d(A, B) := \sup_{x \in A} \inf_{y \in B} d(x, y)
\end{equation}
for all $A, B \in \powerset^\ast(X).$
\end{definition}

Several properties of the Hausdorff-Pompeiu semimetric can be found in
\cite{2}, \cite{6}, \cite{14}.

\begin{definition}
Let $X$ be a topological space. We say that $\s = (X, (f_i)_{i \in I})$ is
an IIFS if $f_i$ is a selfmap of $X$ for all $i \in I.$
\end{definition}

\begin{definition}
An IIFS $\s = (X, (f_i)_{i \in I})$ is said to be non-overlapping if
$f_i(B) \cap f_j(B) = \oldemptyset$ for any $B \subseteq X$ and $i, j \in
I,$ $i \neq j.$
\end{definition}

\begin{remark}
Obviously, an IIFS as above is non-overlapping if and only if $f_i(X) \cap
f_j(X) = \oldemptyset$ for any $i, j \in I, i \neq j.$
\end{remark}

\begin{definition}
An IIFS $\s = (X, (f_i)_{i \in I})$ is said to be locally finite if for
any $y \in X$ there exists a neighbourhood $V_y$ of $y$ such that $\#\{i
\in I: V_y \cap f_i(X) \neq \oldemptyset\} < \infty.$
\end{definition}

\begin{remark}
Let $\s = (X, (f_i)_{i \in I})$ be a locally finite IIFS, $y \in X,$ $A
\subseteq X$ and $V_y$ as in the definition above. Then the set $\{i \in
I: V_y \cap f_i(A) \neq \oldemptyset\}$ is finite.
\end{remark}

\begin{definition}
An IIFS $\s = (X, (f_i)_{i \in I})$ on a metric space $(X, d)$ is said to
be an IIFS of contractions if $(f_i)_{i \in I} \subseteq X^X$ is a bounded
family of contractions such that $\sup_{i \in I} lip(f_i) =: c < 1.$
\end{definition}

\begin{definition}
To an IIFS $\s = (X, (f_i)_{i \in I})$ we can associate two
Hutchinson-Barnsley operators, namely $F_\s, G_\s : \powerset^\ast(X)
\xrightarrow{} \powerset^\ast(X)$ given by
\begin{equation}
    F_\s(A) := \bigcup_{i \in I} f_i(A)
\end{equation}
and
\begin{equation}
    G_\s(A) := \closure{\bigcup_{i \in I} f_i(A)}
\end{equation}
for all $\oldemptyset \neq A \subseteq X.$
\end{definition}

\begin{remark}
Note that if $\s$ is an IIFS of contractions, then $G_\s$ is a contraction
with respect to the Hausdorff-Pompeiu metric on $\be(X)$ and in fact
$lip(G_\s) \leq c = \sup_{i \in I} lip(f_i).$
\end{remark}

A straightforward application of the Banach-Caccioppoli-Picard contraction
principle yields the following result, which is the fundamental result in
the theory of IIFS's (for the proof, you can check \cite{14})

\begin{theorem}
Let $(X, d)$ be a complete metric space and $\s = (X, (f_i)_{i \in I})$ an
IIFS of contractions. Then we may consider $G_\s : \be(X) \xrightarrow{}
\be(X)$ and in this case there exists a unique set $A = A(\s) \in \be(X)$
such that $G_\s(A) = A.$ We call this set the attractor of $\s.$ Moreover,
if $A_0 \in \be(X) $ and $A_n := G_\s(A_{n - 1})$ for any $n \in
\mathbb{N},$ then $\lim_{n \to \infty} A_n = A.$ As for the speed of
convergence, we have the following estimate:
\begin{equation}
    h(A_n, A) \leq \frac{c^n}{1 - c} h(A_0, A_1)
\end{equation}
for all $n \geq 0.$
\end{theorem}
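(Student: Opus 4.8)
The plan is to apply the Banach-Caccioppoli-Picard contraction principle to the operator $G_\s$ acting on the metric space $(\be(X), h)$, where $h$ is the generalised Hausdorff-Pompeiu semimetric restricted to $\be(X)$.

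First I would verify that $(\be(X), h)$ is a complete metric space. On the family of nonempty closed and bounded sets, $h$ is finite (boundedness of the arguments guarantees $h(A, B) < \infty$) and separates points (if $h(A, B) = 0$ with $A, B$ closed, then $A = B$), so it is a genuine metric. The technical heart of this step is completeness: given an $h$-Cauchy sequence $(B_n)_n$ in $\be(X)$, one exhibits its limit explicitly --- for instance as $B := \bigcap_n \closure{\bigcup_{m \geq n} B_m}$ --- and checks that $B$ is nonempty, closed, bounded, and satisfies $h(B_n, B) \to 0$; the nonemptiness and the convergence are exactly where the completeness of $(X, d)$ enters. Since this is a classical fact, I would cite \cite{14} rather than reproduce it in full.

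Next I would check that $G_\s$ is well-defined as a selfmap of $\be(X)$. For $A \in \be(X)$, the set $G_\s(A) = \closure{\bigcup_{i \in I} f_i(A)}$ is nonempty and closed by construction, and boundedness is precisely where the hypothesis that $\s$ is an IIFS of contractions is used: by definition $(f_i)_{i \in I}$ is then a bounded family, so $\bigcup_{i \in I} f_i(A)$ is bounded whenever $A$ is, and passing to the closure preserves boundedness. It then remains only to invoke the remark preceding the statement, which records that $G_\s$ is a contraction on $\be(X)$ with $lip(G_\s) \leq c < 1$.

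With $(\be(X), h)$ complete and $G_\s$ a contraction mapping it into itself, the contraction principle delivers at once the existence of a unique fixed point $A = A(\s) \in \be(X)$ together with the convergence $A_n \to A$ of every Picard orbit $A_n = G_\s(A_{n-1})$. For the rate of convergence I would reproduce the standard a priori estimate: by the triangle inequality and the contraction bound,
\begin{equation*}
    h(A_n, A_{n+k}) \leq \sum_{j=0}^{k-1} h(A_{n+j}, A_{n+j+1}) \leq \sum_{j=0}^{k-1} c^{n+j} h(A_0, A_1) \leq \frac{c^n}{1 - c} h(A_0, A_1),
\end{equation*}
and letting $k \to \infty$, so that $A_{n+k} \to A$, yields $h(A_n, A) \leq \frac{c^n}{1 - c} h(A_0, A_1)$, as claimed. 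The only genuine obstacle is the completeness of $(\be(X), h)$; everything else is routine bookkeeping on top of Banach's theorem.
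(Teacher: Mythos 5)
Your proposal is correct and follows exactly the route the paper itself indicates: the paper gives no proof but states that the result is "a straightforward application of the Banach-Caccioppoli-Picard contraction principle" and refers to \cite{14}, which is precisely the argument you outline (completeness of $(\be(X), h)$, well-definedness and contractivity of $G_\s$ via the preceding remark and the boundedness clause in the definition of an IIFS of contractions, then the standard a priori estimate). Nothing to add.
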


Regarding the shift space associated to an IIFS of contractions on a
metric space $(X, d),$ we have the following definitions and main theorem
from \cite{10}.

\begin{definition}
Let $I \neq \oldemptyset.$ We define:
\begin{enumerate}[a)]
    \item the space $\Lambda = \Lambda(I) := I^\mathbb{N}$ as the space of
    infinite words with letters from the alphabet $I.$ An element $\omega
    \in \Lambda(I)$ will be written as $\omega = \omega_1 \omega_2 \dots
    \omega_n \omega_{n +1} \dots;$
    \item for $m \in \mathbb{N},$ the space $\Lambda_m = \Lambda_m(I)$ of
    words of length $m$ with letters from the alphabet $I.$ An element
    $\omega \in \Lambda_m(I)$ will be written as $\omega = \omega_1
    \omega_2 \dots \omega_m.$ For $\omega \in \Lambda_m$ or $\omega \in
    \Lambda$ and $n \in \mathbb{N}, n \leq m,$ we denote $[\omega]_n :=
    \omega_1 \dots \omega_n;$
    \item the space $\Lambda^\ast = \Lambda^\ast(I) := \bigcup_{m \in
    \mathbb{N}} \Lambda_m(I) \cup \{\lambda\}$ of finite words, where
    $\lambda$ is the empty word;
    \item for $m, n \in \mathbb{N}$ and $\alpha \in \Lambda_n(I),$ $\beta
    \in \Lambda_m(I)$ or $\beta \in \Lambda(I),$ one may define the concatenated words
    $\alpha \beta := \alpha_1 \dots \alpha_n \beta_1 \dots \beta_m \in
    \Lambda_{m + n}$ and \br $\alpha \beta = \alpha_1 \dots \alpha_n \beta_1
    \dots \beta_m \beta_{m + 1} \dots \in \Lambda,$ respectively;
    \item the metric $d_\Lambda$ on $\Lambda(I)$ given by
    $d_\Lambda(\alpha, \beta) := \sum_{n \in \mathbb{N}} \frac{1 -
    \delta_{\alpha_n}^{\beta_n}}{3^n},$ where $\delta_x^y$ denotes the
    Kronecker delta of $x$ and $y.$ Note that $(\Lambda(I), d_\Lambda)$ is
    a complete metric space and convergence in $\Lambda(I)$ coincides with
    the convergence on components;
    \item for $i \in I,$ the right shift function $F_i : \Lambda(I)
    \xrightarrow{} \Lambda(I)$ given by $F_i(\omega) := i \omega$ for all
    $\omega \in \Lambda(I).$ Note that $F_i$ is a $\frac{1}{3}-$similarity
    of $\Lambda(I);$
    \item for $m \in \mathbb{N}$ and $\omega \in \Lambda_m(I),$ $F_\omega
    := F_{\omega_1} \circ \dots \circ F_{\omega_m}$ and $\Lambda_\omega :=
    F_\omega(\Lambda).$ By convention $F_\lambda := \id$ and
    $\Lambda_\lambda = \Lambda.$ Note that $\Lambda(I) = \bigcup_{i \in I}
    F_i(\Lambda(I)),$ so $\Lambda(I)$ is the attractor of the IIFS of
    contractions $(\Lambda(I), (F_i)_{i \in I})$ and for every $m \in
    \mathbb{N}$ and $\omega \in \Lambda^\ast,$ $\Lambda = \bigcup_{\alpha
    \in \Lambda_m} \Lambda_\alpha$ and $\Lambda_\omega = \bigcup_{\alpha
    \in \Lambda_m} \Lambda_{\omega \alpha};$
    \item if $(X, d)$ is a metric space, $\s = (X, (f_i)_{i \in I})$ is an
    IIFS of contractions on $X,$ $B \subseteq X$ and $\omega \in
    \Lambda_m(I),$ let $f_\omega := f_{\omega_1} \circ \dots \circ
    f_{\omega_m}$ and $B_\omega := f_\omega(B).$ By convention $f_\lambda
    := id_X$ and $B_\lambda = B;$
    \item in the setting above, if $f : X \xrightarrow{} X$ is a
    contraction and $(X, d)$ is complete, we denote its fixed point by
    $e_f.$ If $f = f_\omega$ for some $\omega \in \Lambda^\ast,$ we denote
    $e_f = e_{f_\omega} = e_\omega.$
\end{enumerate}
\end{definition}

\begin{theorem}
Let $(X, d)$ be a complete metric space, $\s = (X, (f_i)_{i \in I}),$ $A =
A(\s)$ its attractor and $c := \sup_{i \in I} lip(f_i) < 1.$ Then the
assertions below hold:
\begin{enumerate}[a)]
    \item for any $m \in \mathbb{N}$ and $\omega \in \Lambda(I),$ we have
    that $A_{[\omega]_{m + 1}} \subseteq A_{[\omega]_m}$ and
    \br $\delta(A_{[\omega]_m}) \to 0.$ More precisely,
    $\delta(A_{[\omega]_m}) = \delta(\closure{A_{[\omega]_m}}) \leq c^m
    \delta(A);$
    \item if $a_\omega$ is defined by $\{a_\omega\} := \bigcap_{m \in
    \mathbb{N}} A_{[\omega]_m},$ where $\omega \in \Lambda(I),$ then \br
    $\lim_{m \to \infty} d(e_{[\omega]_m}, a_{\omega}) = 0;$
    \item for every $a \in A$ and $\omega \in \Lambda(I),$ we have
    $\lim_{m \to \infty} f_{[\omega]_m}(a) = a_\omega;$
    \item for every $\alpha \in \Lambda^\ast,$ we have $A = A(\s) =
    \closure{\bigcup_{\omega \in \Lambda} \{a_\omega\}}$ and \br
    $\closure{A_\alpha} = \closure{\bigcup_{\omega \in \Lambda}
    \{a_{\alpha \omega}\}}.$ If $A = \bigcup_{i \in I} f_i(A),$ then $A =
    A(\s) = \bigcup_{\omega \in \Lambda} \{a_\omega\};$
    \item we have $A = \closure{\{e_{[\omega]_m} : \omega \in \Lambda, m
    \in \mathbb{N}\}};$
    \item the function $\pi : \Lambda(I) \xrightarrow{} A,$ defined by
    $\pi(\omega) = a_\omega$ for every $\omega \in \Lambda(I),$ has the
    following properties:
    \begin{enumerate}[i)]
        \item $\pi$ is continuous;
        \item $\closure{\pi(\Lambda)} = A;$
        \item if $A = \bigcup_{i \in I} f_i(A),$ then $\pi$ is surjective;
    \end{enumerate}
    \item for every $i \in I,$ we have that $\pi \circ F_i = f_i \circ
    \pi.$
\end{enumerate}
\end{theorem}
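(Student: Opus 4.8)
The plan is to build everything on two elementary observations: each $f_i$ maps the attractor into itself, since $f_i(A)\subseteq\bigcup_{j\in I}f_j(A)\subseteq G_\s(A)=A$, and composition multiplies Lipschitz constants, whence $lip(f_{[\omega]_m})\le c^m$. Part a) is then immediate: the inclusion $A_{[\omega]_{m+1}}\subseteq A_{[\omega]_m}$ comes from writing $f_{[\omega]_{m+1}}=f_{[\omega]_m}\circ f_{\omega_{m+1}}$ and using $f_{\omega_{m+1}}(A)\subseteq A$, while the diameter bound follows from the Lemma of Section~2 applied to the composition, together with the standard facts that passing to the closure leaves the diameter unchanged and that $\delta(A)<\infty$ because $A\in\be(X)$.

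For part b) I would first pass to closures: the sets $\closure{A_{[\omega]_m}}$ form a decreasing sequence of nonempty closed sets whose diameters tend to $0$, so by Cantor's intersection theorem in the complete space $X$ their intersection is a single point, which I call $a_\omega$. To identify this with $\bigcap_m A_{[\omega]_m}$ (so that the stated definition makes sense) I would prove the composition rule $a_{\alpha\beta}=f_\alpha(a_\beta)$ for $\alpha\in\Lambda^\ast$, $\beta\in\Lambda$: applying the continuous map $f_\alpha$ to $a_\beta\in\closure{A_{[\beta]_k}}$ and using $f_\alpha(\closure{S})\subseteq\closure{f_\alpha(S)}$ together with $f_\alpha(A_{[\beta]_k})=A_{\alpha[\beta]_k}$ places $f_\alpha(a_\beta)$ in every $\closure{A_{[\alpha\beta]_{|\alpha|+k}}}$, hence equal to $a_{\alpha\beta}$. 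Taking $\alpha=[\omega]_m$ and $\beta$ the tail of $\omega$ exhibits $a_\omega$ as the image under $f_{[\omega]_m}$ of a point of $A$, so $a_\omega\in A_{[\omega]_m}$ for every $m$ and the intersection of the non-closed sets is indeed $\{a_\omega\}$. The convergence $d(e_{[\omega]_m},a_\omega)\to0$ then follows once I check $e_{[\omega]_m}\in A_{[\omega]_m}$: the iterates of any $a\in A$ under $f_{[\omega]_m}$ stay in $A$ and converge to the fixed point $e_{[\omega]_m}$, so $e_{[\omega]_m}\in\closure A=A$ and hence $e_{[\omega]_m}=f_{[\omega]_m}(e_{[\omega]_m})\in A_{[\omega]_m}$; both $e_{[\omega]_m}$ and $a_\omega$ lie in $\closure{A_{[\omega]_m}}$, whose diameter is at most $c^m\delta(A)$.

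Part c) drops out of the same estimate, since $f_{[\omega]_m}(a)$ and $a_\omega$ both lie in $\closure{A_{[\omega]_m}}$ for $a\in A$. For part d) I would first iterate the fixed-point identity to get $A=G_\s^m(A)=\closure{\bigcup_{\alpha\in\Lambda_m}A_\alpha}$ (interchanging $f_j$ with closures is routine), and then run an approximation argument: given $a\in A$ and $\varepsilon>0$, choose $m$ with $c^m\delta(A)<\varepsilon$, approximate $a$ by a point of some $A_\alpha$ with $\alpha\in\Lambda_m$, and note that any $a_\omega$ with $[\omega]_m=\alpha$ lies in the same set of diameter $<\varepsilon$; this gives $A\subseteq\closure{\bigcup_{\omega}\{a_\omega\}}$, and the reverse inclusion is trivial. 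The formula for $\closure{A_\alpha}$ follows by applying $f_\alpha$ to $A=\closure{\bigcup_\omega\{a_\omega\}}$ and using $a_{\alpha\omega}=f_\alpha(a_\omega)$; and under the hypothesis $A=\bigcup_i f_i(A)$ one peels off letters recursively to write an arbitrary $a\in A$ as $f_{[\omega]_m}(a^{(m)})$ with $a^{(m)}\in A$, so $a\in\bigcap_m A_{[\omega]_m}=\{a_\omega\}$, removing the closure. Part e) is then immediate: $e_{[\omega]_m}\in A$ gives one inclusion, while $a_\omega=\lim_m e_{[\omega]_m}$ together with d) gives the other.

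Finally, for f) the density statement ii) is just a restatement of d), surjectivity iii) is the non-closure case of d), and continuity i) follows from the ultrametric-type behaviour of $d_\Lambda$: if $d_\Lambda(\alpha,\beta)<3^{-m}$ then $\alpha$ and $\beta$ share the prefix of length $m$, so $a_\alpha,a_\beta\in A_{[\alpha]_m}$ and $d(a_\alpha,a_\beta)\le c^m\delta(A)$, giving (uniform) continuity. Statement g) is the composition rule $a_{i\omega}=f_i(a_\omega)$ specialised to a single letter, since $F_i(\omega)=i\omega$. I expect the only genuinely delicate point to be the well-definedness in b)—showing that the intersection of the \emph{non-closed} sets $A_{[\omega]_m}$ is already a singleton—which is exactly what the composition rule $a_{\alpha\beta}=f_\alpha(a_\beta)$ is designed to handle; everything else is bookkeeping around the geometric estimate $\delta(\closure{A_{[\omega]_m}})\le c^m\delta(A)$.
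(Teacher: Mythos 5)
Your proposal is correct. Note that the paper does not actually prove this theorem: it is quoted verbatim as a known result from reference [10] (Miculescu--Mihail, \emph{The shift space for an infinite iterated function system}). Your argument follows the standard route used there --- the diameter estimate $\delta(A_{[\omega]_m})\leq c^m\delta(A)$, Cantor's intersection theorem applied to the closures, and the composition identity $a_{\alpha\beta}=f_\alpha(a_\beta)$ --- and you correctly isolate and resolve the one delicate point, namely that the intersection of the \emph{non-closed} sets $A_{[\omega]_m}$ is already the singleton $\{a_\omega\}$.
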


Finally we shall state the Tarski-Kantorovitch fixed-point principle.

\begin{definition}
Let $(P, \leq)$ be a poset and $F : P \xrightarrow{} P.$ We say that $F$
is $\leq-$continuous if for every countable chain $\mathcal{C}$ admitting
a supremum, we have that $F(\mathcal{C})$ has a supremum and $F(\sup
\mathcal{C}) = \sup F(\mathcal{C}).$ Note that in this case $F$ is
increasing.
\end{definition}

\begin{theorem}
(Tarski-Kantorovitch) Let $(P, \leq)$ be a poset in which every countable
chain admits a supremum and $F : P \xrightarrow{} P$ a $\leq-$continuous
map such that there exists $a \in P$ with $a \leq F(a).$ Then $F$ has a
fixed point. Moreover, $\sup_{n \in \mathbb{N}} F^n(a)$ is the least fixed
point of $F$ in the set $\{p \in P: p \geq a\}.$
\end{theorem}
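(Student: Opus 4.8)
The plan is to iterate $F$ starting from $a$ and take the supremum of the resulting chain, a countable-chain version of the standard Kleene least-fixed-point construction. First I would set $a_n := F^n(a)$ for $n \in \mathbb{N}$ (with $a_0 := a$) and record that $F$ is increasing: this is already noted in the definition of $\leq$-continuity, and in any case follows by applying the defining property to the two-element chain $\{x, y\}$ with $x \leq y$. Combining monotonicity with the hypothesis $a \leq F(a)$, an immediate induction gives $a_n \leq a_{n+1}$ for all $n$. Thus $\mathcal{C} := \{a_n : n \in \mathbb{N}\}$ is a countable chain, which by assumption admits a supremum $a^\ast := \sup_{n \in \mathbb{N}} F^n(a)$.

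Next I would show that $a^\ast$ is a fixed point. Applying $\leq$-continuity to the chain $\mathcal{C}$ yields that $F(\mathcal{C}) = \{a_{n+1} : n \in \mathbb{N}\}$ has a supremum and that $F(a^\ast) = F(\sup \mathcal{C}) = \sup F(\mathcal{C}) = \sup_{n \in \mathbb{N}} a_{n+1}$. It then remains to identify $\sup_n a_{n+1}$ with $a^\ast = \sup_n a_n$. Since the chain is increasing, dropping the first term $a_0 \leq a_1$ does not alter the set of upper bounds of the family, so the two suprema coincide and $F(a^\ast) = a^\ast$. Note also that $a = a_0 \leq a^\ast$, so $a^\ast$ lies in $\{p \in P : p \geq a\}$, which in particular settles the existence of a fixed point.

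Finally I would verify minimality within $\{p \in P : p \geq a\}$. Let $q$ be any fixed point of $F$ with $q \geq a$. By induction, using that $F$ is increasing together with $F(q) = q$: from $a_0 = a \leq q$, and assuming $a_n \leq q$, I get $a_{n+1} = F(a_n) \leq F(q) = q$. Hence $q$ is an upper bound of $\mathcal{C}$, so $a^\ast = \sup_n a_n \leq q$, which proves that $a^\ast$ is the least fixed point of $F$ in $\{p \in P : p \geq a\}$.

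The only genuinely delicate point is the index-shift step, namely checking that the supremum of the tail $\{a_{n+1}\}$ equals the supremum of the whole chain $\{a_n\}$; this rests solely on monotonicity of the sequence and the fact that an increasing chain and its tail share the same upper bounds. Everything else reduces to routine monotone inductions, so I expect no serious obstacle.
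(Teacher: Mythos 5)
Your proof is correct. The paper states the Tarski--Kantorovitch principle as a known preliminary result and gives no proof of its own (it is imported from the cited literature, e.g.\ Jachymski et al.), so there is nothing to compare against line by line; your argument is the standard Kleene-style iteration: monotonicity of $F$ (which, as you note, already follows from $\leq$-continuity applied to a two-element chain), the increasing orbit $a \leq F(a) \leq F^2(a) \leq \dots$, the index-shift identity $\sup_n F^{n+1}(a) = \sup_n F^n(a)$ justified by the coincidence of upper bounds, and the monotone induction for minimality. All steps are sound, and this is exactly the proof the paper implicitly relies on when it later applies the principle to the posets $(\powerset(X), \supseteq)$, $(\f(X), \supseteq)$ and $(\ka(X), \supseteq)$.
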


\begin{remark}
Note that we can replace the assumption that every countable chain
$\mathcal{C}$ admits a supremum with the assumption that each increasing
sequence $(p_n)_{n \in \mathbb{N}} \subseteq P$ admits a supremum.
\end{remark}

As in \cite{5} we shall assume that every compact or countably compact
space is Hausdorff.

\section{The Hutchinson-Barnsley operator on \texorpdfstring{$(\powerset(X), \supseteq)$}{}}

Let $X$ be an arbitrary set and $f : X \xrightarrow{} X$ a selfmap of $X.$
Proposition $1$ from \cite{8} shows that the function $F : \powerset(X)
\xrightarrow{} \powerset(X)$ defined by $F(A) := f(A)$ for all $A
\subseteq X$ is $\supseteq-$continuous if and only if all fibres of $f$
are finite.

\begin{remark}
Note that the poset $(\powerset(X), \supseteq)$ satisfies the countable
chain condition, as if $(A_n)_{n \in \mathbb{N}} \subseteq \powerset(X),$
then $\sup_{n \in \mathbb{N}} A_n = \bigcap_{n \in \mathbb{N}} A_n$ (and,
of course, $\inf_{n \in \mathbb{N}} A_n = \bigcup_{n \in \mathbb{N}}
A_n$).
\end{remark}

Our main result in this section is the following:

\begin{theorem}
Let $\s = (X, (f_i)_{i \in I})$ be a non-overlapping IIFS such that all
the fibres of $f_i$ are finite for each $i \in I$ and let $F_\s(A) :=
\bigcup_{i \in I} f_i(A)$ for any $A \subseteq X.$ Then for each $A
\subseteq X$ such that $F_\s(A) \subseteq A,$ the set $\bigcap_{n \in
\mathbb{N}} F_\s^n(A)$ is a fixed point of $F_\s.$ In particular,
$\bigcap_{n \in \mathbb{N}}F_\s^n(X)$ is the greatest fixed point of
$F_\s.$ It follows that $\s$ admits a nonempty fixed point if and only if
$\bigcap_{n \in \mathbb{N}}F_\s^n(X) \neq \oldemptyset.$
\end{theorem}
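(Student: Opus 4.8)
The plan is to verify the two hypotheses of the Tarski-Kantorovitch principle for the operator $F_\s$ on the poset $(\powerset(X), \supseteq)$ and then read off the three assertions. Recall that in this poset the order is reverse inclusion, so that the supremum of a countable chain is its intersection (see the Remark above), and the condition ``$a \leq F_\s(a)$'' demanded by the principle reads $F_\s(A) \subseteq A$ --- precisely the hypothesis imposed on $A$. Note also that a $\supseteq$-continuous map is increasing in $(\powerset(X),\supseteq)$, which here means monotone with respect to ordinary inclusion; this will be used repeatedly.

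The crux is to show that $F_\s$ is $\supseteq$-continuous, i.e.\ that for every decreasing sequence $A_1 \supseteq A_2 \supseteq \cdots$ one has $F_\s(\bigcap_n A_n) = \bigcap_n F_\s(A_n)$. The inclusion $\subseteq$ is immediate from monotonicity of each $f_i$. For the reverse inclusion, let $y \in \bigcap_n F_\s(A_n)$. For each $n$ there is an index $i$ with $y \in f_i(A_n) \subseteq f_i(X)$; since $\s$ is non-overlapping, the sets $f_i(X)$ are pairwise disjoint, so this index $i_0$ is the same for every $n$. Hence $y \in \bigcap_n f_{i_0}(A_n)$. Because all fibres of $f_{i_0}$ are finite, Proposition $1$ of \cite{8} tells us that $B \mapsto f_{i_0}(B)$ is itself $\supseteq$-continuous, whence $\bigcap_n f_{i_0}(A_n) = f_{i_0}(\bigcap_n A_n) \subseteq F_\s(\bigcap_n A_n)$, giving $y \in F_\s(\bigcap_n A_n)$ as desired. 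Alternatively one argues directly: the points $x_n \in f_{i_0}^{-1}(y) \cap A_n$ all lie in the finite fibre $f_{i_0}^{-1}(y)$, so by the pigeonhole principle some $x^\ast$ recurs for infinitely many $n$, and since the $A_n$ decrease, $x^\ast \in \bigcap_n A_n$. This is the one step where both the non-overlapping assumption and the finiteness of the fibres are genuinely used, and it is the main obstacle.

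With $\supseteq$-continuity established, and since every countable chain in $(\powerset(X), \supseteq)$ has a supremum, the Tarski-Kantorovitch principle applies to any $A$ with $F_\s(A) \subseteq A$. It yields that $\sup_n F_\s^n(A) = \bigcap_n F_\s^n(A)$ is a fixed point of $F_\s$, which proves the first assertion. Taking $A = X$ --- for which $F_\s(X) \subseteq X$ holds trivially --- the set $\{P : P \geq X\}$ is all of $\powerset(X)$, so the principle identifies $\bigcap_n F_\s^n(X)$ as the $\supseteq$-least, equivalently $\subseteq$-greatest, fixed point; concretely, any fixed point $P$ satisfies $P = F_\s^n(P) \subseteq F_\s^n(X)$ for all $n$ by monotonicity, hence $P \subseteq \bigcap_n F_\s^n(X)$.

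The final equivalence is then immediate: if $\bigcap_n F_\s^n(X) \neq \emptyset$ it is itself a nonempty fixed point, and conversely any nonempty fixed point is contained in the greatest fixed point $\bigcap_n F_\s^n(X)$, forcing the latter to be nonempty.
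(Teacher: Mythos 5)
Your proposal is correct and follows essentially the same route as the paper: establish $\supseteq$-continuity of $F_\s$ by using the non-overlapping hypothesis to pin down a single index $i_\ast$ and the finiteness of the fibre $f_{i_\ast}^{-1}(y)$ to extract (via pigeonhole) a point of $\bigcap_n A_n$ mapping to $y$, then invoke Tarski--Kantorovitch and conclude the greatest-fixed-point and nonemptiness claims by monotonicity. Your alternative shortcut via Proposition 1 of \cite{8} is a minor stylistic variant; the substance of the argument is identical to the paper's.
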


\begin{proof}
We have seen that the poset $(\powerset(X), \supseteq)$ satisfies the
countable chain condition and obviously $F_\s(A) \in \powerset(X)$ for
every $A \subseteq X.$ We shall prove that $F_\s$ is
$\supseteq-$continuous.\\
Let $(C_n)_{n \in \mathbb{N}} \subseteq \powerset(X)$ be a
$\supseteq-$increasing sequence, i.e. decreasing in the usual sense.
Obviously, $F_\s$ is increasing, so $(F_\s(C_n))_{n \in \mathbb{N}}
\subseteq \powerset(X)$ is a decreasing sequence as well. It remains to
prove that \br $F_\s(\sup_{n \in \mathbb{N}} C_n) = \sup_{n \in \mathbb{N}}
F_\s(C_n),$ i.e. $F_\s(\bigcap_{n \in \mathbb{N}} C_n) = \bigcap_{n \in
\mathbb{N}} F_\s(C_n).$\\
Let $y \in F_\s(\bigcap_{n \in \mathbb{N}} C_n) = \bigcup_{i \in I}
f_i(\bigcap_{n \in \mathbb{N}} C_n).$ Then there exist $i \in I$ and \br $x
\in \bigcap_{n \in \mathbb{N}} C_n$ such that $y = f_i(x).$ Then $y \in
f_i(C_n)$ for all $n \in \mathbb{N},$ so \br $y \in \bigcap_{n \in \mathbb{N}}
\bigcup_{i \in I} f_i(C_n) = \bigcap_{n \in \mathbb{N}} F_\s(C_n).$\\
Conversely, let $y \in \bigcap_{n \in \mathbb{N}} \bigcup_{i \in I}
f_i(C_n).$ Then for all $n \in \mathbb{N}$ there exist $i_n \in I$ and
$x_n \in C_n$ such that $y = f_{i_n}(x_n).$ Note that since the sequence
$(C_n)_{n \in \mathbb{N}}$ is decreasing, we have that $y \in f_{i_{n +
1}}(C_{n + 1}) \subseteq f_{i_{n + 1}}(C_n),$ so that \br $y \in f_{i_{n +
1}}(C_n) \cap f_{i_{n}}(C_n),$ so $i_{n + 1} = i_n$ for all $n \in
\mathbb{N}$ since $(f_i)_{i \in I}$ is non-overlapping. Thus, there exists
$i_\ast \in I$ such that $y \in f_{i_\ast}(C_n)$ for all $n \in
\mathbb{N}$ and write $y = f_{i_\ast}(x_n),$ where $x_n \in C_n$ as
before. It follows that $(x_n)_{n \in \mathbb{N}} \subseteq
f_{i_\ast}^{-1}(y).$ Since this fibre is finite, it follows that we may
find $(x_{n_k})_{k \in \mathbb{N}} \subseteq (x_n)_{n \in \mathbb{N}}$ a
subsequence and $x_\ast \in f_{i_\ast}^{-1}(y)$ such that $x_{n_k} =
x_\ast$ for all $k \in \mathbb{N}.$ Since $(C_n)_{n \in \mathbb{N}}$ is
decreasing and $x_\ast = x_{n_k} \in C_{n_k}$ for all $k \in \mathbb{N},$
it follows that $x_\ast \in \bigcap_{n \in \mathbb{N}} C_n.$ Since $x_\ast
\in f_{i_\ast}^{-1}(y),$ we deduce that $y \in \bigcup_{i \in I}
f_i(\bigcap_{n \in \mathbb{N}} C_n) = F_\s(\bigcap_{n \in \mathbb{N}}
C_n),$ which concludes the proof that $F_\s$ is $\supseteq-$continuous.\\
Thus, the conditions of the Tarski-Kantorovitch fixed-point principle are
satisfied and the first part of the theorem follows from a direct
application of this principle.\\
If $A$ is a fixed point of $F_\s,$ then $A = F_\s(A),$ so $A = F_\s^n(A)$
for all $n \in \mathbb{N}.$ Hence, $A = \bigcap_{n \in \mathbb{N}}
F_\s^n(A) \subseteq \bigcap_{n \in \mathbb{N}} F_\s^n(X),$ so that
$\bigcap_{n \in \mathbb{N}} F_\s^n(X)$ is indeed the greatest fixed point
of $F_\s.$ The last part of the theorem is trivial.
\end{proof}

\begin{remark}
Consider $X := [0, 1),$ $(f_m)_{m \geq 2} \subseteq X^X,$ $f_m(x) := \{x +
\frac{1}{m}\},$ where $\{x\}$ symbolises the fractional part of $x$ and
$(C_n)_{n \geq 3}, C_n := [0, \frac{1}{n}] \cup \br [1 - \frac{1}{n}, 1).$
Obviously, the fibres of each $f_m$ are finite, but the system is not
non-overlapping. Clearly, $\bigcap_{n \geq 3} C_n = \{0\},$ so $\bigcup_{m
\geq 2} f_m(\bigcap_{n \geq 3} C_n) = \{\frac{1}{2}, \frac{1}{3},
\dots\}.$ Also note that $0 \in f_n(C_n)$ for all $n \geq 3.$ It follows
that $0 \in \bigcup_{m \geq 2} f_m(C_n)$ for all $n \geq 3,$ so $0 \in
\bigcap_{n \geq 3} \bigcup_{m \geq 2} f_m(C_n) \setminus \bigcup_{m \geq
2} f_m(\bigcap_{n \geq 3} C_n).$ What this simple example tells us is that
we actually need to assume that the IIFS considered in the statement of
the previous theorem is non-overlapping, otherwise the Hutchinson-Barnsley
operator need not be continuous with respect to $\supseteq.$
\end{remark}

\section{The Hutchinson-Barnsley operator on \texorpdfstring{$(\f(X), \supseteq)$}{}}

In what follows $X$ will denote a Hausdorff topological space and $(\f(X),
\supseteq)$ is the poset of nonempty closed parts of $X$ ordered by
$\supseteq.$ In order to apply the Tarski-Kantorovitch fixed-point
principle, we need to have that each countable chain in $(\f(X),
\supseteq)$ admits a supremum. As shown in Proposition $4$ from \cite{8},
this assumption restricts our attention to countably compact spaces. In
fact we will be looking at countably compact sequential spaces $X$ and
establish results regarding the operators $F_\s$ and $G_\s$ in this
setting. Note that in such a space, given a decreasing sequence $(C_n)_{n
\in \mathbb{N}} \subseteq \f(X),$ its supremum is simply $\bigcap_{n \in
\mathbb{N}} C_n.$

\begin{remark}
As $X$ is a countably compact sequential space, Theorem 2.1 assures us
that a function $f : X \xrightarrow{} X$ is continuous if and only if
$f(A)$ is closed for each $A \in \f(X)$ and all fibres of $f$ are closed.
\end{remark}

The main result of this section is the following:

\begin{theorem}
Let $X$ be a countably compact sequential space and \br $\s = (X, (f_i)_{i \in
I})$ a locally finite non-overlapping IIFS, where each $f_i$ is
continuous. Then $F_\s(\f(X)) \subseteq \f(X),$ $\bigcap_{n \in
\mathbb{N}} F_\s^n(X)$ is nonempty and closed and it is the greatest fixed
point of $F_\s.$ Moreover, if $X$ is metrizable, then the sequence
$(F_\s^n(X))_{n \in \mathbb{N}} \subseteq \f(X)$ converges to $\bigcap_{n
\in \mathbb{N}} F_\s^n(X)$ with respect to the Hausdorff-Pompeiu metric.
\end{theorem}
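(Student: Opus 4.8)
The plan is to apply the Tarski-Kantorovitch principle to the poset $(\f(X), \supseteq)$, which requires three ingredients: that countable chains admit suprema, that $F_\s$ maps $\f(X)$ into itself and is $\supseteq$-continuous, and that there is a starting point $a$ with $a \leq F_\s(a)$, i.e. $F_\s(a) \supseteq a$. The supremum condition is handled by countable compactness: for a decreasing sequence $(C_n) \subseteq \f(X)$, the finite intersection property guarantees $\bigcap_n C_n \neq \oldemptyset$, and it is closed, so the supremum is $\bigcap_n C_n$, as noted before the statement. For the starting point I would take $a := X$ itself; since $F_\s(X) = \bigcup_{i} f_i(X) \subseteq X$ trivially, we have $X \leq F_\s(X)$ in the order $\supseteq$, so the iteration $F_\s^n(X)$ is $\supseteq$-increasing and its supremum $\bigcap_n F_\s^n(X)$ will be the least fixed point above $X$, hence the greatest fixed point of $F_\s$ in the usual order.

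First I would verify $F_\s(\f(X)) \subseteq \f(X)$. Given $A \in \f(X)$, each $f_i(A)$ is closed because $f_i$ is continuous on a countably compact sequential space (by the remark preceding the statement). The union $\bigcup_{i \in I} f_i(A)$ is generally infinite, so I must use local finiteness: around each point $y \in X$ there is a neighbourhood $V_y$ meeting only finitely many sets $f_i(A)$, so locally the union is a finite union of closed sets and hence closed; a set closed near every point is closed. Nonemptiness is clear. Next I would establish $\supseteq$-continuity, which is the heart of the argument and where the hypotheses combine. Following the structure of the proof of Theorem 3.2, the inclusion $F_\s(\bigcap_n C_n) \subseteq \bigcap_n F_\s(C_n)$ is immediate from monotonicity. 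For the reverse inclusion, take $y \in \bigcap_n F_\s(C_n)$ and write $y = f_{i_n}(x_n)$ with $x_n \in C_n$; non-overlapping forces all the $i_n$ to be a common index $i_\ast$, exactly as in the finite-fibre argument.

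The main obstacle, and the step that departs from the $\powerset(X)$ case, is extracting a point of $\bigcap_n C_n$ from the sequence $(x_n)$ living in the fibre $f_{i_\ast}^{-1}(y)$. In Theorem 3.2 the fibre was finite, so a constant subsequence appeared by pigeonhole; here the fibre is merely closed, so I instead invoke countable compactness of $X$ to get a cluster point $x_\ast$ of $(x_n)$, and sequentiality to realize this cluster point via a convergent subsequence $x_{n_k} \to x_\ast$. Since each tail $(x_{n_k})_{k \geq j}$ lies in the closed set $C_{n_j}$, the limit $x_\ast$ belongs to every $C_{n_j}$, hence to $\bigcap_n C_n$; and $x_\ast$ lies in the closed fibre $f_{i_\ast}^{-1}(y)$, so $y = f_{i_\ast}(x_\ast) \in F_\s(\bigcap_n C_n)$. (One should take care that closedness of the fibre is what lets $x_\ast$ stay in it, and closedness of the $C_n$ is what lets $x_\ast$ stay in the intersection.) With continuity established, Tarski-Kantorovitch yields that $\bigcap_n F_\s^n(X)$ is a fixed point, and the greatest-fixed-point claim follows as in Theorem 3.2: any fixed point $A$ satisfies $A = F_\s^n(A) \subseteq F_\s^n(X)$ for all $n$, so $A \subseteq \bigcap_n F_\s^n(X)$.

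Finally, for the metrizable addendum, I would observe that $(F_\s^n(X))_n$ is a decreasing sequence of nonempty closed sets in a countably compact, hence compact in the metrizable case, space; I would show it is Cauchy for the Hausdorff-Pompeiu metric and that its limit coincides with $\bigcap_n F_\s^n(X)$. The cleanest route is to note that for a decreasing sequence of nonempty compact sets with intersection $C$, one has $h(F_\s^n(X), C) = d(F_\s^n(X), C) \to 0$: indeed $C \subseteq F_\s^n(X)$ kills one of the two one-sided distances, and if $\sup_x \inf_{c \in C} d(x, c)$ failed to tend to zero along $F_\s^n(X)$ one could extract points staying a fixed distance from $C$ whose cluster point would lie in $\bigcap_n F_\s^n(X) = C$, a contradiction. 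Thus the sequence converges in the Hausdorff-Pompeiu metric to $\bigcap_n F_\s^n(X)$, completing the proof.
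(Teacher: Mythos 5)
Your proposal is correct and follows the paper's overall strategy (the Tarski--Kantorovitch principle applied to $(\f(X), \supseteq)$ with starting point $X$), but two steps are executed differently, and one of them is a genuine improvement. For $F_\s(\f(X)) \subseteq \f(X)$ you use the fact that a locally finite union of closed sets is closed, whereas the paper argues via sequential closedness of $F_\s(A)$, extracting a constant-index subsequence of an approximating sequence $(y_n)$; both work, and yours does not even need the non-overlapping hypothesis for this particular step. More significantly, for the $\supseteq$-continuity the paper merely asserts that the argument is ``the same as in Theorem 3.1,'' but that argument relies on the pigeonhole principle applied to a \emph{finite} fibre $f_{i_\ast}^{-1}(y)$ --- a hypothesis absent here, where the fibres are only known to be closed. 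Your replacement --- the fibre is closed, hence countably compact, and countable compactness together with sequentiality yields a convergent subsequence of $(x_n)$ whose limit lies both in the closed fibre and in every closed $C_n$ --- is exactly the modification needed to make the paper's appeal to Theorem 3.1 legitimate, so in this respect your write-up is more complete than the paper's own. The remaining pieces (the greatest-fixed-point argument and the verification of Hausdorff--Pompeiu convergence in the metrizable case, for which the paper simply cites Edgar's book) match the paper's reasoning.
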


\begin{proof}
First we prove that we can indeed consider the restriction \br $F_\s : \f(X)
\xrightarrow{} \f(X).$ Thus, let $A \in \f(X).$ We shall prove that
\br $F_\s(A) = \bigcup_{i \in I} f_i(A) = \bigcup_{i \in I} \closure{f_i(A)} =
\closure{\bigcup_{i \in I} f_i(A)} = \closure{F_\s(A)}.$ The direct
inclusion is trivial by definition, so we only prove the converse. It
suffices to show that $F_\s(A)$ is sequentially closed.\\
Indeed, let $y \in \closure{\bigcup_{i \in I} f_i(A)}$ and $(y_n)_{n \in
\mathbb{N}} \subseteq \bigcup_{i \in I} f_i(A)$ such that $y_n \to y.$ Let
$V_y$ be the neighbourhood of $y$ provided by the local-finiteness of the
IIFS, i.e. $\#\{i \in I: V_y \cap f_i(X) \neq \oldemptyset\} < \infty.$
Then $V_y$ intersects only finitely many of the sets $(f_i(A))_{i \in I}.$
Since $y_n \to y,$ we may assume that $(y_n)_{n \in \mathbb{N}} \subseteq
V_y.$ Define $i_n \in I$ to be the subscript such that $y_n \in
f_{i_n}(A)$ (it is well defined since this IIFS is non-overlapping). It
follows that $\#\{i_n: n \in \mathbb{N}\} < \infty,$ so we may find a
subsequence $(y_{n_k})_{k \in \mathbb{N}} \subseteq (y_n)_{n \in
\mathbb{N}}$ such that $i_{n_k} = i_{n_1}$ and $y_{n_k} \in
f_{i_{n_1}}(A)$  for all $k \in \mathbb{N}.$ Since $X$ is Hausdorff, we deduce that $y_{n_k} \to y,$ so that $y \in \closure{f_{i_{n_1}}(A)}
\subseteq \bigcup_{i \in I} \closure{f_i(A)} = \bigcup_{i \in I} f_i(A) =
F_\s(A),$ proving that $F_\s(A)$ is indeed closed.\\
$F_\s$ is clearly increasing and the continuity of this operator with
respect to $\supseteq$ is shown in the same way as in Theorem 3.1.\\
It is trivial that $F_\s(X) \subseteq X.$ Hence, all the conditions stated
in the Tarski-Kantorovitch fixed-point principle are satisfied and the
first part of the theorem follows directly from this.\\
For the last part of the theorem, note that if $X$ is metrizable, the
sequence $(F_\s^n(X))_{n \in \mathbb{N}}$ is $\supseteq-$increasing, so it
converges to $\sup_{n \in \mathbb{N}} F_\s^n(X) = \br \bigcap_{n \in
\mathbb{N}} F_\s^n(X)$ with respect to the Hausdorff-Pompeiu metric (see
\cite{4}).
\end{proof}

In the proof of the last theorem, we also obtained a result about the
other Hutchinson-Barnley operator, $G_\s.$

\begin{corollary}
Let $X$ be countably compact and sequential space, $\s = (X, (f_i)_{i \in
I})$ a locally finite non-overlapping IIFS. Then $F_\s(A) = \bigcup_{i \in
I} f_i(A) = \bigcup_{i \in I} \closure{f_i(A)} = \closure{\bigcup_{i \in
I} f_i(A)} = G_\s(A)$ for all $A \in \f(X).$
\end{corollary}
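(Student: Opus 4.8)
The plan is to verify the displayed chain of four equalities, reading it from left to right. The two outermost equalities, $F_\s(A) = \bigcup_{i \in I} f_i(A)$ and $\closure{\bigcup_{i \in I} f_i(A)} = G_\s(A)$, are nothing but the definitions of the two Hutchinson--Barnsley operators (Definition 2.10), so they hold for every $A \subseteq X$ with no hypotheses at all. The content of the corollary is therefore concentrated in the two middle equalities, both of which were effectively established inside the proof of Theorem 4.1; the task is simply to isolate them.

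The first middle equality, $\bigcup_{i \in I} f_i(A) = \bigcup_{i \in I} \closure{f_i(A)}$, reduces to the assertion that each $f_i(A)$ is already closed, i.e. $f_i(A) = \closure{f_i(A)}$. Here I would invoke Theorem 2.1 in the form recorded in Remark 4.1: since $X$ is countably compact and sequential and each $f_i$ is continuous, $f_i$ carries closed sets to closed sets, so $f_i(A) \in \f(X)$ for every $A \in \f(X)$. Taking the union over $i$ then gives the equality. This is the one place where continuity of the $f_i$ is genuinely used, matching the standing hypotheses of Theorem 4.1.

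The second middle equality, $\bigcup_{i \in I} \closure{f_i(A)} = \closure{\bigcup_{i \in I} f_i(A)}$, is the statement that the closure of a locally finite union equals the union of the closures, and this is where I expect the only real work to lie. The inclusion $\subseteq$ is immediate. For $\supseteq$, I would argue directly from local finiteness of the family $(f_i(A))_{i \in I}$, which is inherited from local finiteness of the IIFS (Remark 2.3): given $y \in \closure{\bigcup_{i \in I} f_i(A)}$, choose a neighbourhood $V_y$ meeting only finitely many of the sets $f_i(X)$, hence only finitely many $f_i(A)$, say those indexed by $i_1, \dots, i_k$. If $y$ failed to lie in $\bigcup_{i \in I} \closure{f_i(A)}$, one could intersect $V_y$ with neighbourhoods of $y$ avoiding each of the finitely many closed sets $\closure{f_{i_1}(A)}, \dots, \closure{f_{i_k}(A)}$ to produce a neighbourhood of $y$ disjoint from $\bigcup_{i \in I} f_i(A)$, contradicting $y \in \closure{\bigcup_{i \in I} f_i(A)}$. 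This reproduces, in cleaner form, the sequential-closedness computation of Theorem 4.1, and notably it needs neither the non-overlapping hypothesis nor sequentiality---pure local finiteness suffices.

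The only genuine obstacle is the locally-finite-union identity above, and it is entirely routine once local finiteness of $(f_i(A))_{i \in I}$ has been recorded. The remark worth making explicit is that the middle equality $\bigcup_{i \in I} f_i(A) = \bigcup_{i \in I} \closure{f_i(A)}$ really does require continuity of the maps, in order to guarantee that each $f_i(A)$ is closed, so the corollary is being read under the continuity assumption carried over from Theorem 4.1.
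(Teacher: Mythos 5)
Your proposal is correct, and it reaches the conclusion by a route that differs in one instructive way from the paper's. The paper does not give the corollary a separate proof at all: it extracts the chain of equalities from the proof of Theorem 4.1, where the key inclusion $\closure{\bigcup_{i \in I} f_i(A)} \subseteq \bigcup_{i \in I} f_i(A)$ is proved by a \emph{sequence} argument (take $y_n \to y$, use local finiteness to force the indices $i_n$ into a finite set, pass to a constant-index subsequence), an argument that invokes sequentiality of $X$ and uses the non-overlapping hypothesis to make the assignment $y_n \mapsto i_n$ well defined. Your decomposition into the two middle equalities is the same as the paper's implicit one, and your treatment of the first middle equality (continuity of $f_i$ plus Theorem 2.1/Remark 4.1 gives $f_i(A) \in \f(X)$) matches the paper exactly; but for the second middle equality you replace the sequence argument by the standard neighbourhood argument showing that closure commutes with locally finite unions in an arbitrary topological space. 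This buys you something real: it makes visible that neither sequentiality nor the non-overlapping hypothesis is actually needed for this corollary (the paper's use of non-overlapping there is only a convenience for defining $i_n$, and could be dropped by choosing any admissible index), so the statement holds for any locally finite IIFS of continuous maps on a countably compact sequential space, and indeed the union-of-closures identity needs only local finiteness. Your closing remark that the continuity of the $f_i$ must be read into the corollary's hypotheses (it is stated in Theorem 4.1 but omitted from the corollary's wording) is also accurate and worth recording, since without it the equality $\bigcup_{i \in I} f_i(A) = \bigcup_{i \in I} \closure{f_i(A)}$ can fail.
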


\begin{remark}
Let $X = [0, 1]$ (which is compact and sequential) and $(f_m)_{m \in
\mathbb{N}} \subseteq X^X$ given by $f_m(x) := \frac{x + 1}{2^{2m - 1}}$
for all $x \in X$ and $m \in \mathbb{N}.$ Obviously, the image of $f_m$ is
the closed interval $[\frac{1}{2^{2m - 1}}, \frac{1}{2^{2m - 2}}]$ and
$f_m$ is continuous for any $m \in \mathbb{N}.$ It clearly follows that
the IIFS $\s = (X, (f_m)_{m \in \mathbb{N}})$ is non-overlapping, but not
locally finite (because it is not locally finite at $0$). It is also clear
that $0 \in \closure{\bigcup_{m \in \mathbb{N}} f_m(X)} \setminus
(\bigcup_{m \in \mathbb{N}} f_m(X)),$ so in this case it is not true that
$F_\s(\f(X)) \subseteq \f(X).$ This example shows us that we cannot drop
the condition that the IIFS considered in Theorem 4.1 is locally finite,
because in that case we could have that $F_\s(\f(X)) \not \subseteq \f(X)$
and we wouldn't be able to apply the Tarski-Kantorovitch fixed-point
principle.\\
Also note that there exist locally finite non-overlapping IIFS's. Indeed,
for a nonempty set $I$ consider the IIFS $\s = (\Lambda(I), (F_i)_{i \in
I}).$ This is clearly non-overlapping. Moreover, if $\omega \in
\Lambda(I)$ is arbitrary, consider the open set $U_\omega := \{ \eta \in
\Lambda(I) : d_\Lambda(\omega, \eta) < \frac{1}{3}\}.$ If $\eta \in
\Lambda(I)$ and $\eta_1 \neq \omega_1,$ then $d_\Lambda(\omega, \eta) \geq
\frac{1}{3},$ so $\eta \not\in U_\omega.$ It follows that $U_\omega$ only
intersects $\Lambda_{\omega_1} = F_{\omega_1}(\Lambda(I)),$ so this IIFS
is also locally finite.
\end{remark}

\begin{remark}
Finally, note that we can extend Theorem 4.1 and Corollary 4.1 on the
poset $(\be(X), \supseteq)$ if $X$ is metrizable, the IIFS considered is
also bounded and there exists $B \in \be(X)$ such that $F_\s(B) \subseteq
B$. Indeed, note that in this case $F_\s(\be(X)) \subseteq \be(X)$ since
the IIFS is bounded and Theorem 4.1 clearly shows us that $F_\s(\f(X))
\subseteq \f(X),$ so we may apply the Tarski-Kantorovitch fixed-point
principle to the poset $(\be(X), \supseteq)$ and $F_\s.$ Note that in the
case that such a $B$ exists, then $\bigcap_{n \in \mathbb{N}} F_\s^n(B)$
is nonempty, closed and bounded and it is the greatest fixed point of
$F_\s$ contained in $B.$
\end{remark}

\section{The Hutchinson-Barnsley operator on \texorpdfstring{$(\ka(X), \supseteq)$}{}}

Henceforth $X$ will be a Hausdorff topological space and $(\ka(X),
\supseteq)$ will denote the poset of nonempty compact subsets of $X$
ordered by $\supseteq.$ Note that in this case every countable chain
admits a supremum and if $(C_n)_{n \in \mathbb{N}} \subseteq \ka(X)$ is a
decreasing sequence, then its supremum in this poset is simply $\bigcap_{n
\in \mathbb{N}} C_n.$

\begin{remark}
Note that we may assume that $X$ is compact. Indeed, let $\s = (X,
(f_i)_{i \in I})$ be an IIFS and $F_\s(A) := \bigcup_{i \in I} f_i(A)$ for
all $A \subseteq X$ just as before. In order to apply the
Tarski-Kantorovitch fixed-point principle to $(\ka(X), \supseteq)$ and
$F_\s,$ we would need the existence of a nonempty compact subset $B
\subseteq X$ such that $F_\s(B) \subseteq B.$ Then we may simply consider
the restricted IIFS $\s \restriction_B := (B, (f_i \restriction_B)_{i \in
I})$ and establish the desired result in the poset $(\ka(B), \supseteq).$
\end{remark}

The main result of this section is the following:

\begin{theorem}
Let $X$ be a compact space and $\s = (X, (f_i)_{i \in I})$ a locally
finite non-overlapping IIFS, where $f_i$ is continuous for all $i \in I.$
Then $F_\s(\ka(X)) \subseteq \ka(X),$ $\bigcap_{n \in \mathbb{N}}
F_\s^n(X)$ is nonempty and compact and it is the greatest fixed point of
$F_\s.$ Moreover, if $X$ is metrizable, then the sequence $(F_\s^n(X))_{n
\in \mathbb{N}} \subseteq \ka(X)$ converges to $\bigcap_{n \in \mathbb{N}}
F_\s^n(X)$ with respect to the Hausdorff-Pompeiu metric.
\end{theorem}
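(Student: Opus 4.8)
The plan is to verify the hypotheses of the Tarski-Kantorovitch principle for the poset $(\ka(X), \supseteq)$ and the operator $F_\s$ with the distinguished element $a = X$, exactly as in Theorems 3.1 and 4.1; the novelty is that compactness will play the role that finiteness of the fibres played in Theorem 3.1 and that sequentiality played in Theorem 4.1. Since $X$ is compact (hence Hausdorff), every countable $\supseteq$-chain $(C_n)_{n \in \mathbb{N}}$ in $\ka(X)$ is a decreasing sequence of nonempty compact sets, and its supremum $\bigcap_{n \in \mathbb{N}} C_n$ is again nonempty and compact by the finite intersection property, so the chain-completeness requirement of the principle is met.

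First I would show $F_\s(\ka(X)) \subseteq \ka(X).$ Fix $A \in \ka(X);$ each $f_i(A)$ is compact, hence closed in the Hausdorff space $X.$ To see that $F_\s(A) = \bigcup_{i \in I} f_i(A)$ is closed, take $y \notin F_\s(A)$ and let $V_y$ be a neighbourhood furnished by local finiteness, so that $J := \{i \in I : V_y \cap f_i(X) \neq \emptyset\}$ is finite. For $i \notin J$ we have $f_i(A) \cap V_y = \emptyset,$ so on $V_y$ the set $F_\s(A)$ agrees with the finite union $\bigcup_{i \in J} f_i(A),$ which is closed and misses $y.$ Hence $W_y := V_y \setminus \bigcup_{i \in J} f_i(A)$ is an open neighbourhood of $y$ disjoint from $F_\s(A),$ so $F_\s(A)$ is closed in $X$ and therefore compact. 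In contrast with Theorem 4.1, this step uses the neighbourhood form of local finiteness directly rather than sequential closedness, so no appeal to sequentiality is needed.

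Next I would establish that $F_\s$ is $\supseteq$-continuous. Monotonicity gives $F_\s(\bigcap_{n \in \mathbb{N}} C_n) \subseteq \bigcap_{n \in \mathbb{N}} F_\s(C_n)$ for any decreasing $(C_n) \subseteq \ka(X),$ so only the reverse inclusion requires work, and this is the main obstacle. Given $y \in \bigcap_{n \in \mathbb{N}} F_\s(C_n),$ for each $n$ choose $i_n \in I$ and $x_n \in C_n$ with $y = f_{i_n}(x_n);$ as in Theorem 3.1, non-overlapping forces $i_n$ to be constant, say equal to $i_\ast,$ because $y \in f_{i_n}(X) \cap f_{i_{n + 1}}(X).$ The crucial point is that, lacking finite fibres, I replace the constant-subsequence argument by a compactness argument: the sets $K_n := C_n \cap f_{i_\ast}^{-1}(y)$ form a decreasing sequence of nonempty compact sets (each $f_{i_\ast}^{-1}(y)$ is closed, since $f_{i_\ast}$ is continuous and $X$ is Hausdorff), so $\bigcap_{n \in \mathbb{N}} K_n \neq \emptyset.$ Any $x_\ast \in \bigcap_{n \in \mathbb{N}} K_n$ lies in $\bigcap_{n \in \mathbb{N}} C_n$ and satisfies $f_{i_\ast}(x_\ast) = y,$ whence $y \in F_\s(\bigcap_{n \in \mathbb{N}} C_n).$

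With these verifications in hand, and since $F_\s(X) \subseteq X$ supplies the required element $a = X$ with $a \leq F_\s(a)$ in the $\supseteq$-order, the Tarski-Kantorovitch principle applies and yields that $\sup_{n \in \mathbb{N}} F_\s^n(X) = \bigcap_{n \in \mathbb{N}} F_\s^n(X)$ is a fixed point of $F_\s,$ nonempty and compact by the finite intersection property. To see that it is the greatest fixed point, I would argue directly: any fixed point $A$ satisfies $A = F_\s^n(A) \subseteq F_\s^n(X)$ for every $n,$ so $A \subseteq \bigcap_{n \in \mathbb{N}} F_\s^n(X).$ Finally, in the metrizable case the sequence $(F_\s^n(X))_{n \in \mathbb{N}}$ is a decreasing (i.e.\ $\supseteq$-increasing) sequence of nonempty compact sets, which converges to its intersection in the Hausdorff-Pompeiu metric by the standard result already invoked in Theorem 4.1 (see \cite{4}).
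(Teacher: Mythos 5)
Your proof is correct, but it is genuinely more self-contained than the paper's, which disposes of the theorem in a few lines by deferring to earlier results: the paper obtains $F_\s(\ka(X)) \subseteq \ka(X)$ from the inclusion $F_\s(\f(X)) \subseteq \f(X)$ established in Theorem 4.1 together with the fact that closed subsets of compact spaces are compact, and it asserts that the $\supseteq$-continuity of $F_\s$ ``follows in the same way as in Theorem 3.1.'' Your two direct arguments are worth keeping precisely because those deferrals do not apply verbatim under the stated hypotheses. First, Theorem 4.1 assumes $X$ is sequential and its closedness proof runs through sequential closedness, whereas a compact Hausdorff space need not be sequential; your neighbourhood argument (only the finitely many $f_i(A)$ with $i \in J$ meet $V_y$, each is compact hence closed, so $V_y \setminus \bigcup_{i \in J} f_i(A)$ is an open neighbourhood of $y$ missing $F_\s(A)$) removes any appeal to sequences. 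Second, Theorem 3.1's continuity proof extracts a constant subsequence from $(x_n)_{n \in \mathbb{N}} \subseteq f_{i_\ast}^{-1}(y)$ using the hypothesis that fibres are \emph{finite}, which Theorem 5.1 does not assume; your replacement --- the decreasing sequence of nonempty compact sets $K_n = C_n \cap f_{i_\ast}^{-1}(y)$ has nonempty intersection by the finite intersection property --- is exactly the compactness substitute that makes the step go through (it does use that each $f_{i_\ast}^{-1}(y)$ is closed, which your continuity hypothesis supplies). The remaining steps (applying Tarski--Kantorovitch with $a = X$, the greatest-fixed-point argument via $A = F_\s^n(A) \subseteq F_\s^n(X)$, and the Hausdorff--Pompeiu convergence of a decreasing sequence of compacta in the metrizable case) match the paper's. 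In short: same strategy, but your version supplies the adaptations that the paper's cross-references leave implicit.
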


\begin{proof}
Note that all we need to prove is that $F_\s(\ka(X)) \subseteq \ka(X),$ as
the continuity of $F_\s$ with respect to $\supseteq$ follows in the same
way as in Theorem 3.1 and we clearly have that $F_\s(X) \subseteq X$ (and
$X$ is compact), so we may apply the Tarski-Kantorovitch fixed point
principle to establish the first part of the theorem. Also the last part
of the theorem can be proven in the same way as in Theorem 4.1.\\
To show that $F_\s(\ka(X)) \subseteq \ka(X),$ note that all compact sets
are closed in Hausdorff topological spaces, so $F_\s(\ka(X)) \subseteq
F_\s(\f(X)) \subseteq \f(X).$ But since closed subsets of compact spaces
are compact, we deduce that indeed $F_\s(\ka(X)) \subseteq \ka(X).$
\end{proof}

\begin{corollary}
Let $X$ be an arbitrary topological space and $\s = (X, (f_i)_{i \in I})$
a locally finite non-overlapping IIFS, where $f_i$ is continuous for all
$i \in I.$ The following assertions are equivalent:
\begin{enumerate}[a)]
    \item there exists $A \in \ka(X)$ such that $F_\s(A) = A;$
    \item there exists $A \in \ka(X)$ such that $F_\s(A) \subseteq A.$
\end{enumerate}
\end{corollary}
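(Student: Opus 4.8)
The plan is to dispatch the implication (a) $\Rightarrow$ (b) immediately, since a fixed point $A$ of $F_\s$ satisfies $F_\s(A) = A$ and hence trivially $F_\s(A) \subseteq A$. All the content lies in the reverse implication (b) $\Rightarrow$ (a), and here I would reuse the restriction device already sketched in the remark preceding Theorem 5.1: pass from the arbitrary ambient space $X$ to the compact subspace $A$ that we are handed, and invoke Theorem 5.1 there.

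Concretely, assume $A \in \ka(X)$ satisfies $F_\s(A) = \bigcup_{i \in I} f_i(A) \subseteq A$. This inclusion says precisely that every $f_i$ carries $A$ into itself, so I may form the restricted system $\s \restriction_A := (A, (f_i \restriction_A)_{i \in I})$, where $A$ now carries the subspace topology. The task is then to verify that $\s \restriction_A$ meets the hypotheses of Theorem 5.1. Three of these are routine: $A$ is compact (as a member of $\ka(X)$) and therefore compact Hausdorff by our standing convention; each $f_i \restriction_A : A \to A$ is continuous, being the restriction of a continuous map whose image lies in $A$; and non-overlapping is inherited, since $f_i(B) \cap f_j(B) = \emptyset$ already holds for \emph{all} $B \subseteq X$, in particular for $B \subseteq A$. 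The step demanding a little care is local finiteness: given $y \in A$, I would take the neighbourhood $V_y$ of $y$ in $X$ supplied by local finiteness of $\s$, note that $V_y \cap A$ is a neighbourhood of $y$ in $A$, and observe that $f_i \restriction_A(A) = f_i(A) \subseteq f_i(X)$ forces any index $i$ with $(V_y \cap A) \cap f_i(A) \neq \emptyset$ to satisfy $V_y \cap f_i(X) \neq \emptyset$; hence the set of relevant indices for $\s \restriction_A$ at $y$ embeds into the finite index set for $\s$ at $y$.

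With the hypotheses confirmed, I would apply Theorem 5.1 to the compact space $A$ and the system $\s \restriction_A$, obtaining that $A_\infty := \bigcap_{n \in \mathbb{N}} F_{\s \restriction_A}^n(A)$ is nonempty, compact, and a fixed point of $F_{\s \restriction_A}$. To finish, I would transfer this back to $F_\s$: since $A_\infty \subseteq A$, we have $f_i \restriction_A(A_\infty) = f_i(A_\infty)$ for every $i$, whence $F_{\s \restriction_A}(A_\infty) = F_\s(A_\infty)$, and therefore $F_\s(A_\infty) = A_\infty$ with $A_\infty \in \ka(X)$, which is exactly assertion (a). I do not anticipate any serious obstacle here, as the genuine work has already been carried out in Theorem 5.1; the only point meriting attention is the verification that local finiteness descends to the subspace $A$, which the image inclusion $f_i(A) \subseteq f_i(X)$ makes straightforward.
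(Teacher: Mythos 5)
Your proposal is correct and follows exactly the route of the paper's own proof: the implication $a) \Rightarrow b)$ is immediate, and $b) \Rightarrow a)$ is obtained by restricting $\s$ to the compact invariant set $A$ (as in Remark 5.1) and applying Theorem 5.1 there. The paper leaves the verification that the hypotheses of Theorem 5.1 pass to $\s \restriction_A$ implicit, whereas you check them (including the descent of local finiteness to the subspace) explicitly and correctly.
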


\begin{proof}
Obviously, $a) \implies b).$ The converse follows from applying Theorem
5.1 to the restricted IIFS $\s \restriction_A$ described in Remark 5.1.
\end{proof}

A direct application of Corollary 5.1 (thus a direct application of \br
Theorem 5.1) is the next result, which establishes the existence of a
fixed point of the Hutchinson-Barnsley operator associated to a locally
finite non-overlapping IIFS of contractions on a bounded Heine-Borel
metric space.

\begin{corollary}
Let $(X, d)$ be a bounded Heine-Borel metric space and \br $\s = (X, (f_i)_{i
\in I})$ a locally finite non-overlapping IIFS of contractions, where the
contractive constant of $f_i$ is $h_i \in (0, 1).$ Then there exists a
nonempty compact subset $A \subseteq X$ such that $F_\s(A) = A.$
\end{corollary}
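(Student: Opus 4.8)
The plan is to reduce everything to Corollary 5.1, and the one observation that makes this work is that a bounded Heine--Borel metric space is necessarily compact. First I would note that $X$, regarded as a subset of itself, is closed, and it is bounded by hypothesis; the Heine--Borel property then guarantees that $X$ is compact, so in particular $X \in \ka(X)$.

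With compactness of $X$ in hand, I would verify that $\s$ satisfies the standing hypotheses of Corollary 5.1. Each $f_i$ is a contraction, hence continuous, and by assumption $\s$ is a locally finite non-overlapping IIFS; thus Corollary 5.1 applies. To invoke the implication b) $\Rightarrow$ a) of that corollary, it suffices to exhibit a nonempty compact set carried into itself by $F_\s$, and the obvious candidate is $A = X$ itself: it lies in $\ka(X)$ by the first step, and $F_\s(X) = \bigcup_{i \in I} f_i(X) \subseteq X$ holds trivially since each $f_i$ is a self-map of $X$. Condition b) of Corollary 5.1 is therefore met, and the corollary immediately produces a nonempty compact set $A$ with $F_\s(A) = A$.

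There is no genuine obstacle here; the point worth stressing is that the contractivity assumption enters only through continuity of the maps (the individual constants $h_i$ play no further role in the argument), while the bounded Heine--Borel hypothesis is exactly what turns the ambient space into an admissible compact invariant set on which Corollary 5.1 can be run. As flagged in the introduction, this line of reasoning says nothing about uniqueness of the resulting fixed point.
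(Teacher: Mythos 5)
Your proof is correct, but it takes a genuinely different and more elementary route than the paper. The paper's proof follows Corollary 2 of the Jachymski--Gajek--Pokarowski article: it uses the fixed points $e_i$ of the contractions $f_i$, notes that $M := \sup_{i \in I} d(e_1, e_i) < \infty$ and $h := \sup_{i \in I} h_i < 1$, and (implicitly, by reference) constructs a closed ball around $e_1$ of sufficiently large radius that is invariant under $F_\s$; this ball is compact by the Heine--Borel property, and Corollary 5.1 then yields the fixed point. You instead observe that the boundedness hypothesis already makes $X$ itself a closed bounded subset of itself, hence compact, and $F_\s(X) \subseteq X$ trivially, so Corollary 5.1 applies at once with $A = X$. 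Your argument is shorter and, as you note, uses the contractivity only for continuity --- it would work verbatim for any locally finite non-overlapping IIFS of continuous self-maps of a bounded Heine--Borel space. What the paper's approach buys is robustness: it is the argument that survives if the boundedness of $X$ is dropped, since then one genuinely needs to manufacture a bounded invariant set from the data $M$ and $h$; under the stated hypotheses, however, that machinery is not needed and your reduction is the cleaner one.
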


\begin{proof}
The proof is the same as the proof of Corollary 2 from \cite{8} with the
remark that in this case, we have that $M := \sup_{i \in I} d(e_1, e_i) <
\infty,$ where $e_i$ is the fixed point of $f_i$ for each $i \in I$ and $h
:= \sup_{i \in I} h_i < 1$ by the definition of an IIFS of contractions.
\end{proof}

\section{Remarks regarding the canonical projection \texorpdfstring{$\pi : \Lambda(I)
\xrightarrow{} A(\s)$}{} for an IIFS of contractions}

Throughout this section, $(X, d)$ is a complete metric space and $\s = (X,
(f_i)_{i \in I})$ is an IIFS of contractions on $X.$ As in Definition 2.9,
we will denote $c := \sup_{i \in I} lip(f_i) < 1.$ The attractor of $\s$
will be denoted by $A = A(\s).$ By $\Lambda(I)$ we mean the shift space
associated to this IIFS (as in Definition 2.11) and $\pi : \Lambda(I)
\xrightarrow{} A(\s)$ is the canonical projection from the shift space to
the attractor of $\s.$ Note that each metric space is sequential.

\begin{proposition}
With the notations above, if $c \leq \frac{1}{3},$ then $\pi$ is a
contraction and $lip(\pi) \leq 3 \delta(A).$
\end{proposition}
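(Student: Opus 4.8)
The plan is to estimate $d(\pi(\alpha),\pi(\beta)) = d(a_\alpha,a_\beta)$ directly against $d_\Lambda(\alpha,\beta)$ for arbitrary distinct $\alpha,\beta \in \Lambda(I)$, exploiting the diameter decay of the sets $A_{[\omega]_m}$ recorded in Theorem 2.4 a). First I would fix $\alpha \neq \beta$ and introduce $k \geq 0$, the length of their longest common prefix: the largest integer with $[\alpha]_k = [\beta]_k$, so that necessarily $\alpha_{k+1} \neq \beta_{k+1}$. Here $k = 0$ corresponds to $\alpha_1 \neq \beta_1$ and to the empty word $\lambda$, for which $A_\lambda = A$ by the convention of Definition 2.11; this lets the case of differing first letters be absorbed into the general argument with no separate treatment.

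Next I would produce the two one-sided estimates. For the lower bound on the shift distance, I read off from the definition of $d_\Lambda$ that the first $k$ summands vanish, the $(k+1)$-th equals $\tfrac{1}{3^{k+1}}$, and the remaining ones are nonnegative, whence $d_\Lambda(\alpha,\beta) \geq \tfrac{1}{3^{k+1}}$. For the upper bound on $d(a_\alpha,a_\beta)$, I observe that $\{a_\alpha\} = \bigcap_{m} A_{[\alpha]_m} \subseteq A_{[\alpha]_k}$ and, since the first $k$ letters agree, also $\{a_\beta\} \subseteq A_{[\beta]_k} = A_{[\alpha]_k}$; hence both points lie in the single set $A_{[\alpha]_k}$, and Theorem 2.4 a) gives $d(a_\alpha,a_\beta) \leq \delta(A_{[\alpha]_k}) \leq c^k \delta(A)$. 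Combining the two, and using the hypothesis $c \leq \tfrac{1}{3}$ (so that $c^k \leq 3^{-k}$), I obtain
\[
d(a_\alpha,a_\beta) \leq c^k \delta(A) \leq \frac{\delta(A)}{3^k} = 3\delta(A)\cdot\frac{1}{3^{k+1}} \leq 3\delta(A)\, d_\Lambda(\alpha,\beta),
\]
establishing that $\pi$ is Lipschitz with $lip(\pi) \leq 3\delta(A)$, as asserted.

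I expect the only genuine subtlety to be the interplay in the final inequality rather than any of the individual steps, which are just a reading of the definitions together with Theorem 2.4 a). The point is that the factor $3^{-k}$ coming from the diameter estimate must be dominated uniformly in $k$ by the factor $3^{-(k+1)}$ appearing in the lower bound for $d_\Lambda$, and this is possible precisely because $c \leq \tfrac{1}{3}$ forces $c^k \leq 3^{-k}$. Were $c > \tfrac{1}{3}$, the ratio $c^k/3^{-(k+1)} = 3(3c)^k$ would blow up as $k \to \infty$, so the threshold $\tfrac{1}{3}$ is exactly what the argument requires; it matches the base $3$ chosen in the metric $d_\Lambda$ and the $\tfrac{1}{3}$-similarity property of the shift maps $F_i$.
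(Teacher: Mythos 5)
Your argument is correct and is essentially the paper's own proof: both introduce the length $k$ (the paper calls it $m$) of the longest common prefix, bound $d_\Lambda(\alpha,\beta)$ below by $3^{-(k+1)}$, bound $d(a_\alpha,a_\beta)$ above by $\delta(A_{[\alpha]_k}) \leq c^k\delta(A)$ using part a) of the shift-space theorem (Theorem 2.3 in the paper's numbering, not 2.4), and combine via $c^k \leq 3^{-k}$. The only differences are cosmetic (your handling of $k=0$ via $A_\lambda = A$ matches the paper's convention $\alpha_0=\beta_0:=\lambda$).
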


\begin{proof}
Indeed, let $\alpha, \beta \in \Lambda(I),$ $\alpha \neq \beta$ and write
$\alpha = \alpha_1 \alpha_2 \dots \alpha_n \alpha_{n + 1} \dots,$ $\beta =
\beta_1 \beta_2 \dots \beta_n \beta_{n + 1} \dots.$ Define $m := \max \{i
\geq 0 : \alpha_{i} = \beta_{i}\},$ where we define $\alpha_0 = \beta_0 :=
\lambda.$ Then $\alpha_j = \beta_j$ for all $0 \leq j \leq m$ and
$\alpha_{m + 1} \neq \beta_{m + 1}.$ It follows from the definition of
$d_\Lambda$ that $\frac{1}{3^{m + 1}} \leq d_\Lambda(\alpha, \beta) \leq
\sum_{j \geq m + 1} \frac{1}{3^j} = \frac{1}{2}\frac{1}{3^m}.$ Moreover,
note that $a_\alpha, a_\beta \in \closure{A_{[\alpha]_m}} =
\closure{A_{[\beta]_m}},$ so $d(\pi(\alpha), \pi(\beta)) = d(a_\alpha,
a_\beta) \leq \delta(\closure{A_{[\alpha]_m}}) \leq c^m \delta(A)$ (by
part $a)$ of Theorem $2.3$). Thus, $d(\pi(\alpha), \pi(\beta)) \leq c^m
\delta(A) \leq \frac{1}{3^m} \delta(A) = \frac{1}{3^{m + 1}} 3 \delta(A)
\leq 3 \delta(A) d_\Lambda(\alpha, \beta).$ Since the inequality is also
valid when $\alpha = \beta,$ the conclusion follows.
\end{proof}

\begin{proposition}
If $X$ is countably compact and $\s$ is also locally finite and
non-overlapping, then the canonical projection $\pi : \Lambda(I)
\xrightarrow{} A$ is surjective.
\end{proposition}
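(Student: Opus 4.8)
The plan is to reduce the surjectivity of $\pi$ to the single identity $A = \bigcup_{i \in I} f_i(A)$, because part f) iii) of Theorem 2.3 already guarantees that this equality forces $\pi$ to be onto. Thus the whole argument amounts to upgrading the defining relation of the attractor, namely $A = G_\s(A) = \closure{\bigcup_{i \in I} f_i(A)}$, to the stronger relation $A = F_\s(A) = \bigcup_{i \in I} f_i(A)$; equivalently, to showing that the union $\bigcup_{i \in I} f_i(A)$ is already closed and hence equal to its closure.

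First I would verify that the hypotheses of Corollary 4.1 are in force in the present setting. Since $(X, d)$ is a metric space it is sequential, and it is countably compact by assumption; each $f_i$ is a contraction and therefore continuous; and $\s$ is locally finite and non-overlapping by hypothesis. Moreover, the attractor satisfies $A \in \be(X) \subseteq \f(X)$ by Theorem 2.2 (a closed and bounded set is in particular closed), so $A$ is a legitimate argument for the operators appearing in Corollary 4.1.

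Applying Corollary 4.1 to the closed set $A$ then yields
\[
\bigcup_{i \in I} f_i(A) = \closure{\bigcup_{i \in I} f_i(A)} = G_\s(A).
\]
Because $A$ is the attractor of $\s$, we have $G_\s(A) = A$, and hence $A = \bigcup_{i \in I} f_i(A) = F_\s(A)$. Feeding this identity into part f) iii) of Theorem 2.3 concludes that $\pi$ is surjective.

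There is no genuine obstacle once the reduction is made; the only point that requires care is checking that all the hypotheses of Corollary 4.1 survive in the contractive setting of this section. In particular, the continuity of each $f_i$ (which is what ultimately makes each image $f_i(A)$ closed and thereby drives the closedness argument in the proof of Theorem 4.1) comes for free from the $f_i$ being contractions, while the sequentiality of $X$ is automatic from metrizability. Everything else is a direct citation of the two earlier results.
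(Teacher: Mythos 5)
Your proposal is correct and follows essentially the same route as the paper: both reduce surjectivity to the identity $A = \bigcup_{i \in I} f_i(A)$ via Theorem 2.3 f) iii) (equivalently, Proposition 5.1 of the cited shift-space paper) and obtain that identity by applying Corollary 4.1 to the closed attractor $A$, using that a countably compact metric space is sequential and that contractions are continuous. The paper's proof is just a terser citation of the same chain of results.
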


\begin{proof}
It follows directly from Remark $4.3$, Theorem $4.1$, Corollary $4.1$, the
definition of the attractor of this IIFS and Proposition $5.1$ from
\cite{10} (stating that $\pi$ is onto if and only if $A = \bigcup_{i \in
I} f_i(A)$).
\end{proof}

\begin{remark}
Note that if the IIFS considered has the property that $f_\omega(B) \cap
f_\gamma(B) = \oldemptyset$ for all $B \in \be(X)$ and $\omega, \gamma \in
\Lambda^\ast(I),$ $\omega \neq \gamma$ (we shall say in this case that the
IIFS is strongly non-overlapping) and $X$ and $\s$ satisfy the conditions
in the last proposition, then $\pi$ is also injective. Indeed, let
$\omega, \gamma \in \Lambda(I),$ $\omega \neq \gamma$ and let $m \in
\mathbb{N}$ be such that $\omega_m \neq \gamma_m.$ Since $\pi$ is
surjective, Proposition $5.1$ from \cite{10} tells us that $f_\alpha(A) =
A_\alpha = \bigcup_{\omega \in \Lambda(I)} \{a_{\alpha \omega}\}$ for any
$\alpha \in \Lambda^\ast(I).$ Then $[\omega]_m \neq [\gamma]_m,$ so
$f_{[\omega]_m}(A) \cap f_{[\gamma]_m}(A) = A_{[\omega]_m} \cap
A_{[\gamma]_m} = \oldemptyset.$ But $a_\omega \in A_{[\omega]_m}$ and
$a_\gamma \in A_{[\gamma]_m},$ so it follows that $\pi(\omega) = a_\omega
\neq a_\gamma = \pi(\gamma),$ i.e. $\pi$ is injective.
\end{remark}

The last remark proves the following:

\begin{proposition}
If $X$ is countably compact and $\s$ is also locally finite and strongly
non-overlapping, then the canonical projection $\pi : \Lambda(I)
\xrightarrow{} A$ is bijective.
\end{proposition}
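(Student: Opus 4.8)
The plan is to prove bijectivity by splitting it into surjectivity and injectivity and invoking results already established: Proposition $6.2$ supplies surjectivity, while the preceding remark supplies injectivity. The only genuine point requiring verification is that the hypotheses match up, since Proposition $6.2$ is phrased for a merely non-overlapping IIFS whereas here we are handed the stronger \emph{strong} non-overlapping hypothesis. So the single substantive step I would carry out is to check that strong non-overlapping implies ordinary non-overlapping; everything else is assembly.

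First I would verify the implication strongly non-overlapping $\Rightarrow$ non-overlapping. Recall $(X,d)$ is a metric space, so every nonempty finite subset of $X$ lies in $\be(X)$, being both closed (as $X$ is $T_1$) and bounded (finite diameter). Suppose, for $i\neq j$ in $I$, that $y\in f_i(X)\cap f_j(X)$, and write $y=f_i(x_1)=f_j(x_2)$ with $x_1,x_2\in X$. Applying the strong non-overlapping condition to the set $B:=\{x_1,x_2\}\in\be(X)$ and to the length-one words $i,j\in\Lambda_1(I)\subseteq\Lambda^\ast(I)$ forces $f_i(B)\cap f_j(B)=\emptyset$, contradicting $y\in f_i(B)\cap f_j(B)$. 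Hence $f_i(X)\cap f_j(X)=\emptyset$ for all $i\neq j$, and by Remark $2.2$ the IIFS $\s$ is non-overlapping.

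With this in hand the two ingredients fit together cleanly. Since $X$ is countably compact and $\s$ is locally finite and (by the step above) non-overlapping, Proposition $6.2$ gives that $\pi$ is surjective. At the same time, the hypotheses of the present proposition are exactly those invoked in the last remark — countable compactness of $X$, local finiteness of $\s$, the non-overlapping property just derived, and strong non-overlapping — so that remark gives that $\pi$ is injective. Combining the two, $\pi:\Lambda(I)\to A$ is bijective, as claimed.

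I do not expect a real obstacle here: the argument is essentially bookkeeping, and the only thing to be careful about is confirming that the strong non-overlapping hypothesis is genuinely enough to trigger Proposition $6.2$ (through ordinary non-overlapping) while simultaneously meeting the precise conditions used in the last remark. No new estimates or topological arguments beyond those already assembled in the section are needed.
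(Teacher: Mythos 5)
Your proof is correct and follows essentially the same route as the paper, which simply combines the surjectivity from Proposition 6.2 with the injectivity established in the preceding remark. Your extra check that strong non-overlapping implies ordinary non-overlapping (via a two-point set in $\be(X)$ and length-one words) is a small but legitimate piece of diligence that the paper leaves implicit.
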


Finally, we will give sufficient conditions for the canonical projection to be
a homeomorphism and give a few corollaries.

\begin{theorem}
Let $(X, d)$ be a complete metric space and $\s = (X, (f_i)_{i \in I})$ an
IFS of bi-Lipschitz contractions with attractor $A = A_\s \in \be(X)$
admitting a seed space such that:
\begin{enumerate}[a)]
    \item the coding map $\pi: I^\omega \xrightarrow[]{} A_\s$ is continuous
    and bijective (in particular, this implies that $\s$ satisfies (SSC)- the strong separation condition);
    \item if $c_{ij} := \inf_{x, y \in A} d(f_i(x), f_j(y)) > 0$ (from (SSC)),
    we ask that $c := \inf_{i, j \in I} c_{ij} > 0.$
\end{enumerate}
Then $\pi : I^\omega \xrightarrow[]{} A$ is a homeomorphism.
\end{theorem}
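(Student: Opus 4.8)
The plan is to exploit that $\pi$ is already assumed continuous and bijective, so the only thing left is the continuity of $\pi^{-1} : A \xrightarrow{} I^\omega$; since $I^\omega = \Lambda(I)$ need not be compact when $I$ is infinite, the usual ``continuous bijection out of a compact space'' shortcut is unavailable and I would argue directly. Because the metric $d_\Lambda$ makes two words close exactly when they share a long common prefix (if $[\omega]_M = [\omega']_M$ then $d_\Lambda(\omega, \omega') \leq \sum_{n > M} 3^{-n} = \tfrac{1}{2 \cdot 3^M}$), continuity of $\pi^{-1}$ at a point $a \in A$ reduces to the following: writing $\omega := \pi^{-1}(a)$, for every depth $M \in \mathbb{N}$ I must produce $\varepsilon > 0$ such that any $a' \in A$ with $d(a, a') < \varepsilon$ satisfies $[\pi^{-1}(a')]_M = [\omega]_M$.

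The first key step is a one-symbol separation lemma coming from (SSC) and hypothesis b). For any word $\eta \in \Lambda(I),$ part g) of Theorem 2.3 gives $a_\eta = \pi(\eta) = \pi(F_{\eta_1}(\sigma \eta)) = f_{\eta_1}(a_{\sigma \eta}),$ where $\sigma$ denotes the left shift $\sigma(\eta) = \eta_2 \eta_3 \dots;$ since $a_{\sigma \eta} \in A,$ this exhibits $a_\eta \in f_{\eta_1}(A).$ Hence if $\omega, \omega' \in \Lambda(I)$ satisfy $\omega_1 \neq \omega'_1,$ then $d(a_\omega, a_{\omega'}) \geq c_{\omega_1 \omega'_1} \geq c.$ Contrapositively, $d(a_\omega, a_{\omega'}) < c$ already forces $\omega_1 = \omega'_1.$

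The second step bootstraps this to arbitrary depth using the bi-Lipschitz hypothesis. Let $\ell_i > 0$ denote a lower Lipschitz bound for $f_i,$ i.e. $d(f_i(x), f_i(y)) \geq \ell_i\, d(x, y).$ Suppose inductively that $\omega$ and $\omega' := \pi^{-1}(a')$ agree on their first $k - 1$ letters, so that $[\omega]_{k-1} = [\omega']_{k-1}$ and, by iterating the relation above, $a = f_{[\omega]_{k-1}}(a_{\sigma^{k-1} \omega})$ and $a' = f_{[\omega]_{k-1}}(a_{\sigma^{k-1} \omega'}).$ Composing the lower bounds along the common prefix yields $d(a_{\sigma^{k-1}\omega}, a_{\sigma^{k-1}\omega'}) \leq \bigl(\prod_{j=1}^{k-1} \ell_{\omega_j}\bigr)^{-1} d(a, a').$ Applying the one-symbol lemma to the shifted words $\sigma^{k-1}\omega, \sigma^{k-1}\omega',$ I obtain $\omega_k = \omega'_k$ as soon as the right-hand side is $< c.$ Therefore the choice $\varepsilon := c \cdot \min_{1 \leq k \leq M} \prod_{j=1}^{k-1} \ell_{\omega_j}$ (an empty product being $1$) makes the induction run for all $k \leq M,$ giving $[\pi^{-1}(a')]_M = [\omega]_M$ and hence $d_\Lambda(\pi^{-1}(a), \pi^{-1}(a')) \leq \tfrac{1}{2 \cdot 3^M}.$ Letting $M \to \infty$ proves $\pi^{-1}$ continuous, so $\pi$ is a homeomorphism.

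The point requiring the most care is that the family $(f_i)_{i \in I}$ need not admit a uniform lower Lipschitz constant $\inf_{i \in I} \ell_i > 0$ when $I$ is infinite, so $\varepsilon$ cannot be chosen independently of the base point $a.$ This is exactly why I phrase the argument as \emph{pointwise} continuity of $\pi^{-1}$: along the fixed code $\omega = \pi^{-1}(a)$ only the finitely many constants $\ell_{\omega_1}, \dots, \ell_{\omega_{M-1}}$ enter the threshold, so their product is positive and $\varepsilon > 0.$ Were one to additionally assume $\inf_{i \in I} \ell_i > 0,$ the same computation would upgrade the conclusion to uniform continuity of $\pi^{-1}.$ The only other thing to pin down is that the infimum defining $c$ in b) is effectively taken over distinct pairs $i \neq j,$ since $c_{ii} = 0;$ this is the content of (SSC) invoked in a).
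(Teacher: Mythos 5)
Your proof is correct, and it follows the same basic mechanism as the paper's: bound $d(a_\alpha, a_\beta)$ from below by the product of the lower Lipschitz constants along the common prefix times the separation constant $c,$ then convert ``$d(a_\alpha, a_\beta)$ small'' into ``long common prefix'' into ``$d_\Lambda(\alpha,\beta)$ small.'' There are, however, two genuine differences worth recording. First, you obtain the key identity $a_\eta = f_{\eta_1}(a_{\sigma\eta})$ directly from part g) of Theorem 2.3 ($\pi \circ F_i = f_i \circ \pi$) and then induct along the prefix, whereas the paper estimates $d(f_{[\alpha]_n}(x), f_{[\beta]_n}(y))$ for $n$ beyond the first disagreement and passes to the limit via part c); these are interchangeable. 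Second, and more substantively, the paper's proof opens by positing a single $l \in (0,1)$ with $l\, d(x,y) \leq d(f_i(x), f_i(y))$ \emph{for all} $i \in I$ simultaneously — a uniform lower bi-Lipschitz bound that is an additional hypothesis when $I$ is infinite and is not explicitly listed in the theorem's statement. Your argument needs only a per-map constant $\ell_i > 0,$ and along a fixed code $\omega$ only the finitely many constants $\ell_{\omega_1}, \dots, \ell_{\omega_{M-1}}$ enter the threshold, so you recover continuity of $\pi^{-1}$ under the literal hypotheses at the cost of losing uniformity. This trade-off is exactly analogous to the paper's own Remark iii), which weakens hypothesis b) to a codeword-dependent separation constant $c_\alpha$ and likewise sacrifices uniform continuity; your observation does the same thing on the Lipschitz side, and the two proofs coincide (yours yielding the paper's explicit modulus $\delta_\varepsilon = c \cdot l^{-\log_3 \varepsilon}$) once one assumes $\inf_{i \in I} \ell_i = l > 0.$ Your closing remark that the infimum in b) must be taken over $i \neq j$ (since $c_{ii} = 0$) is a small but real clarification of the statement.
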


\begin{proof}
All we need to show is that the inverse of $\pi$ is continuous, i.e.
$\pi^{-1}: A \xrightarrow[]{} I^\omega$ is continuous. Let $l, L \in (0, 1)$
such that $l d(x, y) \leq d(f_i(x), f_i(y)) \leq L d(x, y)$ for all $i \in
I$ and $x, y \in X.$\\
Let us fix $\varepsilon > 0$ and define $\delta_\varepsilon := c \cdot
l^{-\log_3 \varepsilon}.$ For $\alpha \in I^\omega,$ we want to show that
$\pi^{-1}(A \cap B_X(a_\alpha, \delta_\varepsilon)) \subseteq B_I(\alpha,
\varepsilon).$ We define $n(\cdot, \cdot)$ in the following way:
$n(\alpha, \beta) := \sup \{n \geq 0: [\alpha]_n = [\beta]_n\}$ for all
$\alpha, \beta \in I^\omega.$ Let us fix $\alpha, \beta \in I^\omega,$
$\alpha \neq \beta$ and $n > n(\alpha, \beta) =: p.$ Then we have that
$d(f_{[\alpha]_n}(x), f_{[\beta]_n}(y)) = d(f_{[\alpha]_p}(f_{\alpha[p,
n]}(x)), f_{[\beta]_p}(f_{\beta[p, n]}(y))) \geq l^p d(f_{\alpha_{p + 1}}(u), f_{\beta_{p + 1}}(v)),$ where $u$ and $v$ are some elements in the
attractor of $\s.$ Then by hypothesis we get \newline $d(f_{[\alpha]_n}(x),
f_{[\beta]_n}(y)) \geq l^p c_{\alpha_{p + 1} \beta_{p + 1}} \geq l^p \cdot
c > 0.$ Keeping in mind the statement of $c)$ of Theorem $2.3,$ we deduce
that if $d(a_\alpha, a_\beta) < \delta_\varepsilon,$ then we must have that
$l^p \cdot c < \delta_\varepsilon = c \cdot l^{- \log_3 \varepsilon}.$
Therefore, we infer that $p > - \log_3 \varepsilon.$ Consequently, we have
that $d_I(\alpha, \beta) < \frac{1}{3^{n(\alpha, \beta)}} = \frac{1}{3^p} <
3^{\log_3 \varepsilon} = \varepsilon.$ Hence the desired inclusion:
$\pi^{-1}(A \cap B_X(a_\alpha, \delta_\varepsilon)) \subseteq B_I(\alpha,
\varepsilon).$
\end{proof}

\begin{remark}
\begin{enumerate}[i)]
    \item Note that the $\delta_\varepsilon$ we defined in the proof of the
    previous theorem does not depend on $\alpha \in I^\omega,$ so $\pi^{-1}$
    is actually uniformly continuous;
    \item Recall that in a metric space, a set is compact if and only if it is
    complete and totally bounded;
    \item Note that if $\# I < \infty,$ then the second assumption is
    superfluous since it is always true. We want to explain why the last
    theorem is not necessarily very restrictive. One of the main points of the
    theorem is that the system considered consists in bi-Lipschitz function,
    which we have seen that is pivotal point of the proof. A large class of
    interest in the theory of iterated function systems is that of
    self-similar systems, i.e. systems of similarities. Obviously, every
    similarity is in particular a bi-Lipschitz function, so the class of
    systems considered is larger than that of self-similar systems. The second
    important assumption is that the canonical projection is continuous and
    bijective. If $I$ is finite, then the only condition here is that the
    system satisfies (SSC), which is not a big ask. If $I$ is infinite, then
    we also ask that this projection is surjective. Once again, a sufficiently
    large class of systems satisfy this condition. Finally, we asked that
    $\inf_{i, j \in I} c_{i j} > 0.$ We are not entirely sure how much this
    reduces the class of functions considered when $I$ is infinite. However,
    having gained some insight from the proof of the last theorem, we actually
    deduce a lesser condition which allows us to conclude that the inverse of
    the coding map is continuous. More exactly, we want the following
    condition to hold: given $\alpha \in I^\omega,$ the number $c_\alpha :=
    \inf_{n \in \mathbb{N}, j \in I} c_{\alpha_n j}$ is strictly positive. In
    this case we lose the uniform continuity of $\pi^{-1},$ but what matters
    is that $\pi^{-1}$ is still continuous;
    \item Note that $I^\omega$ is compact if and only if $I$ is finite.
    Indeed, since $I^\omega$ is a metric space, it is easier to prove that
    $I^\omega$ is sequentially compact if and only if $I$ is finite, which is
    fairly easy to see.
\end{enumerate}
\end{remark}

\begin{corollary}
If $\pi : I^\omega \xrightarrow[]{} A_\s$ is a homeomorphism, then the
attractor of $\s$ is totally disconnected. 
\end{corollary}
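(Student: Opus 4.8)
The plan is to show that a homeomorphism between $I^\omega$ and the attractor transfers total disconnectedness from the shift space to $A_\s$. The key observation is that total disconnectedness is a topological invariant: it is preserved under homeomorphisms. So the entire problem reduces to establishing that the shift space $I^\omega = \Lambda(I)$ is itself totally disconnected, and then invoking the homeomorphism $\pi$ to carry this property over to $A_\s$.

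\begin{proof}
Since $\pi : I^\omega \xrightarrow{} A_\s$ is a homeomorphism and total disconnectedness is a topological invariant, it suffices to show that $I^\omega = \Lambda(I)$ is totally disconnected. To this end, let $\alpha, \beta \in \Lambda(I)$ with $\alpha \neq \beta$ and let $m \in \mathbb{N}$ be such that $\alpha_m \neq \beta_m.$ Consider the set $U := \{\eta \in \Lambda(I): \eta_m = \alpha_m\}.$ We claim that $U$ is clopen. Indeed, if $\eta \in U$ and $d_\Lambda(\eta, \zeta) < \frac{1}{3^m},$ then $\zeta_m = \eta_m = \alpha_m$ (otherwise $d_\Lambda(\eta, \zeta) \geq \frac{1 - \delta_{\eta_m}^{\zeta_m}}{3^m} = \frac{1}{3^m}$), so $\zeta \in U$ and hence $U$ is open. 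Similarly, the complement $\{\eta \in \Lambda(I): \eta_m \neq \alpha_m\}$ is open by the same argument, so $U$ is closed as well. Now $\alpha \in U$ while $\beta \notin U$ since $\beta_m \neq \alpha_m.$ Therefore, if $C$ is any connected subset of $\Lambda(I)$ containing both $\alpha$ and $\beta,$ then $C \cap U$ and $C \setminus U$ would form a separation of $C$ into two nonempty relatively clopen sets, contradicting the connectedness of $C.$ Hence no connected subset of $\Lambda(I)$ contains two distinct points, i.e. the connected components of $\Lambda(I)$ are singletons, so $\Lambda(I)$ is totally disconnected. Since $\pi$ is a homeomorphism, $A_\s = \pi(\Lambda(I))$ is totally disconnected as well.
\end{proof}

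The main step is the construction of the clopen sets $U$ separating any two distinct points, which relies on the explicit form of the metric $d_\Lambda$ from Definition 2.11, part e). The nondegeneracy of the Kronecker-delta term in the defining sum guarantees that differing in the $m$-th coordinate forces a distance of at least $\frac{1}{3^m},$ which is precisely what makes the cylinder sets open; their complements are open by the identical estimate. No genuine obstacle arises here, as the argument is purely point-set topological once the homeomorphism is granted by hypothesis; the only care needed is to verify both $U$ and its complement are open so that $U$ is clopen.
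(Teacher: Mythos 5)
Your proof is correct and follows the same route as the paper, which simply cites the fact that $I^\omega$ is totally disconnected and transfers this across the homeomorphism; you merely supply the standard cylinder-set verification of that fact, which the paper omits.
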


\begin{proof}
This follows immediately from the fact that $I^\omega$ is totally disconnected
in the topology induced by $d_I.$
\end{proof}

\begin{corollary}
All finite iterated function systems of bi-Lipschitz functions satisfying
(SSC) have the property that their attractor is homeomorphic to their
associated shift space.
\end{corollary}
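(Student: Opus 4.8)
The plan is to recognize this corollary as an immediate specialization of Theorem 6.1 to the case of a finite alphabet.

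First I would record the standard facts available for a finite system. Since each $f_i$ is a bi-Lipschitz contraction on the complete metric space $(X,d)$ and $I$ is finite, $\s$ is in particular an IIFS of contractions, so by Theorem $2.2$ its attractor $A = A(\s)$ exists, lies in $\be(X)$, and is compact. Because $\bigcup_{i \in I} f_i(A)$ is a finite union of compact sets it is already closed, whence $A = G_\s(A) = \closure{\bigcup_{i \in I} f_i(A)} = \bigcup_{i \in I} f_i(A)$. Invoking part f) of Theorem $2.3$, the coding map $\pi$ is then continuous by f)i) and surjective by f)iii).

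The crucial step is to deduce injectivity of $\pi$ from (SSC), i.e. from $f_i(A) \cap f_j(A) = \emptyset$ for $i \neq j$. Here I would exploit that a bi-Lipschitz map is injective (its lower Lipschitz bound forces $f_i(x) = f_i(y) \implies x = y$), so that (SSC) propagates to all words of equal length: given distinct $\omega, \gamma \in \Lambda_m(I)$, let $k$ be the first index at which they differ; applying the injective map $f_{[\omega]_{k-1}} = f_{[\gamma]_{k-1}}$ to the disjoint sets $f_{\omega_k}(A)$ and $f_{\gamma_k}(A)$ yields $A_\omega \cap A_\gamma = \emptyset$. This supplies exactly the disjointness used in the injectivity argument of Remark $6.1$: for distinct $\alpha, \beta \in I^\omega$ first differing at some index $m$, the points $a_\alpha \in A_{[\alpha]_m}$ and $a_\beta \in A_{[\beta]_m}$ (by Theorem $2.3$ a)--b)) lie in disjoint sets since $[\alpha]_m \neq [\beta]_m$, so $\pi(\alpha) = a_\alpha \neq a_\beta = \pi(\beta)$. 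Together with the previous paragraph, hypothesis a) of Theorem $6.1$ is verified.

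Finally, hypothesis b) is automatic in the finite case: as noted in Remark $6.2$ iii), when $\# I < \infty$ the quantity $c = \inf_{i,j \in I} c_{ij}$ is an infimum of finitely many strictly positive numbers (each $c_{ij} > 0$ by (SSC)), and hence $c > 0$. With both hypotheses of Theorem $6.1$ established, that theorem yields directly that $\pi : I^\omega \xrightarrow{} A$ is a homeomorphism, which is the assertion that the attractor is homeomorphic to the associated shift space.

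The one point demanding genuine care is the second paragraph, namely the propagation of (SSC) from single symbols to equal-length words and the resulting injectivity; everything else is a direct citation of Theorems $2.2$, $2.3$, $6.1$ and Remark $6.2$. I would also note in passing that the seed-space hypothesis of Theorem $6.1$ is unproblematic here, since any finite contractive system on a complete space admits the standard seed generating its attractor, so no additional verification is needed on that count.
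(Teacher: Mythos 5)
Your proposal is correct and follows the same route as the paper, which simply cites Theorem 6.1 together with Remark 6.2 iii). You have merely filled in the details that the paper leaves implicit — in particular the verification that for finite $I$ the strong separation condition propagates to equal-length words (via injectivity of the bi-Lipschitz maps) and hence yields injectivity of $\pi$, and that $\inf_{i,j} c_{ij} > 0$ is automatic for a finite alphabet.
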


\begin{proof}
This follows immediately from the comments made in the last remark and the
last theorem.
\end{proof}

\begin{corollary}
Let $\s$ be an IIFS of bi-Lipschitz functions satisfying (SSC) and whose
coding map is surjective. Assume that the attractor of $\s$ is compact. Then
$\pi$ cannot be a homeomorphism. In particular, neither condition $b),$ nor
the condition stated in the previous remark holds.
\end{corollary}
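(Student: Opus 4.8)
The plan is to prove the first assertion by contradiction and then obtain the two ``in particular'' statements by contraposition against Theorem 6.4 and the refinement in part iii) of the previous remark. The decisive ingredient is part iv) of the previous remark: $I^\omega$ is compact if and only if $I$ is finite. Since $\s$ is an \emph{infinite} iterated function system, $I$ is infinite and hence $I^\omega$ is not compact. If $\pi$ were a homeomorphism, then $I^\omega$ would be homeomorphic to $A_\s$; as $A_\s$ is assumed compact and compactness is preserved under homeomorphisms, $I^\omega$ would be compact, a contradiction. This already yields the first, and main, conclusion.

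For the remaining assertions I first record that, under the stated hypotheses, $\pi$ is a continuous bijection, i.e. condition a) of Theorem 6.4 holds. Continuity is part f)i) of Theorem 2.3 and surjectivity is assumed, so only injectivity requires comment. Here I would argue exactly as in Remark 6.1: given $\alpha \neq \beta$ in $I^\omega$ with first discrepancy at index $m$, write $[\alpha]_{m-1} = [\beta]_{m-1} =: \gamma$; then $A_{[\alpha]_m} = f_\gamma(f_{\alpha_m}(A))$ and $A_{[\beta]_m} = f_\gamma(f_{\beta_m}(A))$. By (SSC) the sets $f_{\alpha_m}(A)$ and $f_{\beta_m}(A)$ are disjoint, and since each $f_i$ is bi-Lipschitz it is injective, whence $f_\gamma$ is injective and $A_{[\alpha]_m} \cap A_{[\beta]_m} = \emptyset$. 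As $a_\alpha \in A_{[\alpha]_m}$ and $a_\beta \in A_{[\beta]_m}$, we obtain $\pi(\alpha) = a_\alpha \neq a_\beta = \pi(\beta)$.

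With condition a) in hand, both remaining claims follow by contraposition. If condition b) held, then all hypotheses of Theorem 6.4 would be satisfied and that theorem would force $\pi$ to be a homeomorphism, contradicting the first part; hence b) fails. Similarly, if the condition of part iii) of the previous remark held, namely $c_\alpha := \inf_{n \in \mathbb{N},\, j \in I} c_{\alpha_n j} > 0$ for every $\alpha \in I^\omega$, then the argument indicated there would again show that $\pi^{-1}$ is continuous and hence that $\pi$ is a homeomorphism, once more contradicting the first part; so this condition fails as well.

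The proof is largely a repackaging of earlier results, and I do not anticipate a serious obstacle. The one delicate point is the verification of bijectivity: one must check that the strong separation of the first-level pieces $f_i(A)$ propagates to disjointness of the cylinders $A_{[\alpha]_m}$ and $A_{[\beta]_m}$, which is precisely where the injectivity coming from the bi-Lipschitz assumption is used. Everything else is a direct appeal to Theorem 2.3, Theorem 6.4, and part iv) of the previous remark.
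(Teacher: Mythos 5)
Your proof is correct and is essentially the argument the paper intends: the paper states this corollary without an explicit proof, the evident justification being that $I^\omega$ is non-compact when $I$ is infinite (part iv) of the preceding remark), while a homeomorphism would transport the assumed compactness of $A_\s$ back to $I^\omega$, and the two final claims then follow by contraposition against the homeomorphism theorem and part iii) of the remark. Your additional verification that (SSC) together with the injectivity of bi-Lipschitz maps forces $\pi$ to be injective (so that condition a) of that theorem genuinely holds and the contrapositive applies) is a detail the paper leaves implicit, and you carry it out correctly.
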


\begin{corollary}
If $\s$ is an IIFS of bi-Lipschitz functions satisfying the conditions of the
last theorem, then the attractor of $\s$ is not totally bounded.
\end{corollary}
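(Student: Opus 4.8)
The plan is to argue by contradiction, chaining together the compactness characterization recalled in Remark 6.2(ii), the previously established Corollary 6.3, and the conclusion of the last theorem (Theorem 6.1). Suppose, contrary to the claim, that the attractor $A = A_\s$ is totally bounded. The first step is to upgrade this to compactness: since $A \in \be(X)$, it is a closed subset of the complete metric space $(X, d)$, hence complete in its own right. Invoking the characterization from Remark 6.2(ii)—that a metric space is compact if and only if it is complete and totally bounded—I would conclude that $A$ is compact.

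Next I would check that the hypotheses of Corollary 6.3 are all in place. By assumption $\s$ satisfies the conditions of Theorem 6.1, so it is an IIFS of bi-Lipschitz functions whose coding map $\pi$ is continuous and bijective. In particular, condition a) of that theorem forces $\s$ to satisfy (SSC), and bijectivity of $\pi$ gives surjectivity of the coding map. Combined with the compactness of $A$ just obtained, these are precisely the hypotheses of Corollary 6.3, which therefore yields that $\pi$ \emph{cannot} be a homeomorphism.

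This stands in direct contradiction to the conclusion of Theorem 6.1, which—under the very same hypotheses we are assuming—guarantees that $\pi : I^\omega \xrightarrow{} A$ \emph{is} a homeomorphism. The contradiction shows that the assumption of total boundedness is untenable, and hence $A$ is not totally bounded.

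As for the main obstacle, there is none of real substance: the argument is a short logical chain, and its only subtle point is the justification that total boundedness of $A$ forces compactness. This is where completeness of $A$ is essential, and it is supplied for free by the fact that $A$ is closed in the complete ambient space $X$; from there the compactness characterization does the rest. I expect the entire proof to be no more than a few lines once these references are assembled in the correct order.
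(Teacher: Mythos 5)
Your proof is correct and is essentially the paper's argument run in the contrapositive direction: both hinge on the facts that $A$ is complete (being closed in the complete space $X$), that a metric space is compact iff it is complete and totally bounded, and that $I^\omega$ is not compact for infinite $I$ (the latter being exactly the content of the Corollary you invoke). The paper simply argues directly that $A$, being homeomorphic to the non-compact $I^\omega$ by the last theorem, is a complete non-compact space and hence not totally bounded.
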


\begin{proof}
Obviously, it would be true that $A$ is homeomorphic to the shift space of
$\s.$ But $I^\omega$ is not compact. Therefore, neither is the attractor of
$\s.$ However, $A$ is closed in a complete metric space, so it is also
complete. Since it is not compact, we deduce that it is not totally bounded.
\end{proof}

\end{document}